\documentclass[11pt]{amsart}
\usepackage{amssymb,amsmath,amsthm}
\oddsidemargin = 1.2cm \evensidemargin = 1.2cm \textwidth = 6.1in
\textheight =7.7in

\newcommand{\leg}[2]{\genfrac{(}{)}{}{}{#1}{#2}}

\newtheorem{theorem}{Theorem}
\newtheorem{lemma}[theorem]{Lemma}
\newtheorem{corollary}[theorem]{Corollary}
\newtheorem*{conjecture}{\bf Conjecture}
\newtheorem{proposition}[theorem]{Proposition}

\theoremstyle{remark}

\newtheorem*{remark}{Remark}

\numberwithin{theorem}{section} \numberwithin{equation}{section}

\newcommand{\R}{\mathbb{R}}
\newcommand{\C}{\mathbb{C}}

\newcommand{\Z}{\mathbb{Z}}
\newcommand{\N}{\mathbb{N}}
\newcommand{\SL}{{\text {\rm SL}}}

\newcommand{\re}{\textnormal{Re}}

\begin{document}
\title[Aymptotics for rank partition functions]{Asymptotics for  rank partition functions} 

\author{Kathrin Bringmann}
\address{School of Mathematics\\University of Minnesota\\ Minneapolis, MN 55455 \\U.S.A.}
\email{bringman@math.umn.edu}    
 \subjclass[2000] {11P82,
05A17 }

\date{\today}
\maketitle

\begin{abstract}
In this paper, we obtain asymptotic formulas for an infinite class of rank generating functions.
As an application, we solve a conjecture of Andrews and Lewis on inequalities between certain ranks.
\end{abstract}
\section{Introduction and Statement of Results}\label{intro}
A {\it partition} of a positive integer $n$ is any non-increasing
sequence of positive integers whose sum is  $n$. As usual, let
$p(n)$ denote the number of partitions of $n$. The partition
function $p(n)$ has the well known infinite product generating
function
\begin{equation}\label{partgen}
1+ 
\sum_{n=1}^{\infty}p(n)q^n=\prod_{n=1}^{\infty} \frac{1}{1-q^n}
=1+\sum_{n=1}^{\infty}
\frac{q^{n^2}}{(1-q)^2(1-q^2)^2\cdots (1-q^n)^2}.
\end{equation}
Hardy and Ramanujan showed the following asymptotic formula for $p(n)$
\begin{eqnarray*}
p(n)\sim \frac{1}{4n\sqrt{3}}\cdot e^{\pi\sqrt{2n/3}} \qquad  \qquad (n \to \infty).
\end{eqnarray*}
Using the modularity of the generating  function  for $p(n)$, Rademacher obtained an exact formula for $p(n)$.
To state his result, let $I_{s}(x)$ be the usual $I$-Bessel
function of order  $s$, and let $e(x):=e^{2\pi i x}$. Furthermore,
if $k\geq 1$  and $n$ are integers, then let
\begin{equation}\label{Akn}
A_k(n):=\frac{1}{2} \sqrt{\frac{k}{12}} \sum_{\substack{x \pmod {24k}\\
x^2 \equiv -24n+1 \pmod{24k}}}  \chi_{12}(x)
\cdot e\left(\frac{x}{12k}\right),
\end{equation}
where the sum runs over the residue classes modulo $24k$, and
where
\begin{eqnarray} \label{chi12}
\chi_{12}(x)
:= \leg{12}{x}.
\end{eqnarray}
If $n$ is a positive integer, then  Rademacher
showed that
\begin{equation}\label{Radformula}
p(n)= \frac{2 \pi}{ (24n-1)^{3/4}}
\sum_{k =1}^{\infty}
\frac{A_k(n)}{k}\cdot  I_{\frac{3}{2}}\left( \frac{\pi
\sqrt{24n-1}}{6k}\right).
\end{equation}

The partition function also satisfies some nice congruence properties; the most famous ones are the so-called Ramanujan congruences:
\begin{displaymath}
\begin{split}
p(5n+4)&\equiv 0\pmod 5,\\
p(7n+5)&\equiv 0\pmod 7,\\
p(11n+6)&\equiv 0\pmod{11}.\\
\end{split}
\end{displaymath}
In order to explain the congruences with modulus $5$ and $7$  combinatorially, 
Dyson \cite{Dy1} introduced   the ``rank" of a partition.
The rank of a partition is defined to be its
largest part minus the number of its parts. 
Dyson conjectured that the partitions of $5n+4$ (resp. $7n+5$) form $5$ (resp. $7$) groups of equal size when sorted by  their ranks modulo $5$ (resp. $7$). This conjecture was proved in 1954 by Atkin and Swinnerton-Dyer \cite{ASD}. 

If $N(m,n)$ denotes the number of partitions of $n$ with rank
$m$, then it is well known that
\begin{equation}
R(w;q):=1+\sum_{n=1}^{\infty}\sum_{m=-\infty}^{\infty}
N(m,n)w^mq^n= 1+ \sum_{n=1}^{\infty} \frac{q^{n^2}}{(wq;q)_n
(w^{-1}q;q)_n},
\end{equation}
where
$$
(a;q)_n:=(1-a)(1-aq)\cdots (1-aq^{n-1}).
$$
Obviously, by letting $w=1$, we obtain (\ref{partgen}). 
Moreover, if $N_e(n)$ (resp. $N_o(n)$) denotes the number of
partitions of $n$ with even (resp. odd) rank, then by letting
$w=-1$ we obtain
\begin{equation}\label{rankgenfcn}
1+\sum_{n=1}^{\infty}(N_e(n)-N_o(n))q^n= 1+ \sum_{n=1}^{\infty}
\frac{q^{n^2}}{(1+q)^2(1+q^2)^2\cdots (1+q^n)^2}\ .
\end{equation} 
In the following we denote this series by $f(q)$ and its $n$-th Fourier coefficient by $\alpha(n)$.
The  series  $f(q)$ is one of the third order mock theta functions defined by 
Ramanujan  in his last letter to Hardy dated January 1920
(see pages 127-131 of  \cite{Ra}). There 
 Ramanujan  claimed, without including a proof, that 
$$
\alpha(n)=(-1)^{n-1}\frac{\exp\left(\pi\sqrt{\frac{n}{6}-\frac{1}{144}}\right)}
{2\sqrt{n-\frac{1}{24}}}+O\left(
\frac{\exp\left(\frac{1}{2}\pi\sqrt{\frac{n}{6}-\frac{1}{144}}\right)}
{\sqrt{n-\frac{1}{24}}}
\right).
$$
Dragonette \cite{Dr}  proved this claim in her Ph.D. thesis 
written 1951 under the direction of Rademacher. Andrews \cite{An2}  improved this in his  Ph.D.
thesis 1964, also written under Rademacher, as 
\begin{equation}\label{andrews66}
\alpha(n)=\frac{\pi}{ \sqrt{24n-1}}
\sum_{k=1}^{[\sqrt{n}\,]}
\frac{ (-1)^{\lfloor
\frac{k+1}{2}\rfloor}A_{2k}\left(n-\frac{k(1+(-1)^k)}{4}\right)}{k}
\cdot I_{\frac{1}{2}}\left(\frac{\pi
\sqrt{24n-1}}{12k}\right)+O(n^{\epsilon}).
\end{equation}
Moreover Andrews and Dragonette made the following conjecture.
\begin{conjecture} {\text {\rm (Andrews-Dragonette)}}\newline
If $n$ is a positive integer, then
\begin{equation}\label{conj}
\alpha(n)=\frac{\pi}{(24n-1)^{\frac{1}{4}}}\sum_{k=1}^{\infty} \frac{
(-1)^{\lfloor
\frac{k+1}{2}\rfloor}A_{2k}\left(n-\frac{k(1+(-1)^k)}{4}\right)}{k}
\cdot I_{\frac{1}{2}}\left(\frac{\pi \sqrt{24n-1}}{12k}\right).
\end{equation}
\end{conjecture}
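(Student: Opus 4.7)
The plan is to prove the conjectured exact formula (\ref{conj}) for $\alpha(n)$ by realizing $q^{-1/24}f(q)$ as the holomorphic part of a weight $1/2$ harmonic weak Maass form on $\Gamma_0(2)$, constructing a Maass-Poincar\'e series on the same group with the same multiplier system and an identical principal part at every cusp, and then reading off the Fourier coefficients.

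First I would invoke Zwegers' completion: after multiplying by $q^{-1/24}$, the series $f(q)$ can be paired with an explicit non-holomorphic period integral so that the resulting sum $\widehat{f}(\tau)$ transforms as a weight $1/2$ harmonic weak Maass form on $\Gamma_0(2)$ with the multiplier system inherited from $\eta(24\tau)$. This embeds $\alpha(n)$ into a framework where Rademacher-style exact formulas become available — not for the mock object itself, but for its canonical modular completion.

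Next I would build the Maass-Poincar\'e series $P(\tau)$ of weight $1/2$ by averaging a seed function of the shape $\mathcal{M}_s(-\pi y/6)\, e(-x/24)$ (constructed from $M$-Whittaker functions) over $\Gamma_\infty\backslash\Gamma_0(2)$ and specializing $s$ to the harmonic value. A standard unfolding computes the Fourier expansion of $P(\tau)$: the $n$-th holomorphic Fourier coefficient is an infinite sum indexed by the lower-left entry $c=2k$ of coset representatives, with summand equal to a Kloosterman-type sum weighted by an $I$-Bessel function whose argument is $\pi\sqrt{24n-1}/(12k)$. The alternating sign $(-1)^{\lfloor(k+1)/2\rfloor}$ and the shift $n - k(1+(-1)^k)/4$ will arise from the $\eta$-multiplier, whose value depends on the parity of $k$; the character $\chi_{12}$ and the quadratic congruence defining $A_{2k}$ in (\ref{Akn}) come from evaluating the Kloosterman sum explicitly after substituting the multiplier.

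Finally I would show $\widehat{f}(\tau)=P(\tau)$. By construction both sides are weight $1/2$ harmonic weak Maass forms on $\Gamma_0(2)$ with matching multiplier and matching principal parts at all cusps, so their difference is a holomorphic cusp form of weight $1/2$; the Serre-Stark classification, combined with the fact that the relevant multiplier admits no such cusp form, forces this difference to vanish. Comparing Fourier coefficients then yields (\ref{conj}), with the $k=0$-type contribution from $\Gamma_\infty$ furnishing the identity piece. The main obstacle is pinning down the completion with the correct multiplier precisely enough that the Poincar\'e series can be built to agree with $\widehat{f}$ on the nose — an error of a root of unity in the seed, or an off-by-one in the $c$-variable, would leave $\alpha(n)$ agreeing with the Poincar\'e series only up to a nontrivial modular correction and would also scramble the identification of the Kloosterman sums with $A_{2k}(n-k(1+(-1)^k)/4)$; carrying out the parity bookkeeping of the $\eta$-multiplier on cosets with even versus odd $k$ is the most delicate step.
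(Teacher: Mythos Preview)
The paper does not actually contain a proof of the Andrews--Dragonette conjecture; it states the conjecture in the introduction and records that it was proved in \cite{BO1} ``using the theory of Maass--Poincar\'e series,'' then moves on to prove the asymptotic Theorem~\ref{main1} for the coefficients $A(a/c;n)$ by the Circle Method. So there is no in-paper argument against which to compare your proposal line by line.

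That said, your outline is the correct strategy and is precisely the one the paper attributes to \cite{BO1}: complete $f$ to a harmonic weak Maass form of weight $1/2$, build the matching Maass--Poincar\'e series, and identify the two by showing that their difference lies in a space of holomorphic forms that is zero. Two small points of normalization: the paper (and \cite{BO1}) phrase the completion for $q^{-1}f(q^{24})$ rather than $q^{-1/24}f(q)$, which shifts the level accordingly; and the vanishing of the difference is argued in \cite{BO1} via Kohnen's plus-space theory rather than a direct appeal to Serre--Stark, though the spirit is the same. Your identification of the parity bookkeeping in the $\eta$-multiplier as the delicate step is accurate.

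By contrast, the method actually developed in this paper is the Hardy--Ramanujan Circle Method applied directly to $N(a/c;q)$ via the transformation law of Theorem~\ref{transfo}; this yields only $O(n^{\epsilon})$ error terms, not exact formulas, and is restricted to odd $c$, so it does not reprove (\ref{conj}) (which corresponds to the even case $c=2$). The paper explicitly notes in the remarks after Theorem~\ref{main1} that exact formulas would require modifying the \cite{BO1} argument rather than the Circle Method carried out here.
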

In \cite{BO1}  Ono and the author proved this conjecture using the theory of Maass-Poincar\'e series. It turns out that  $q^{-1}f\left(q^{24}\right)$  is the ``holomorphic part" of  a weak Maass form (see \cite{BO2} for the definition of a weak Maass form). In \cite{BO2} 
they showed that a similar phenomenon is true for all functions 
$$
R(\zeta_c^a;q)=:
1 + \sum_{n=1}^{\infty}A \left(\frac{a}{c};n \right)q^n,
$$
 where $\zeta_n:=e^{\frac{2 \pi i}{ n}}$ and  $0<a<c$ are  integers.
 More specificly, $R\left(\zeta_c^a;q \right)$  is the holomorphic part of a weak Maass form of weight $\frac{1}{2}$. Using this deeper insight, in this paper, 
 we are able to obtain  asymptotic formulas for all the coefficients $A\left(\frac{a}{c};n \right)$, which in turn implies asymptotics for the rank partition functions.

Before we state our result, we need some more notation. 
We let $k$ and $h$ be coprime integers, $h'$ defined by $hh' \equiv -1 \pmod k$ if $k$ is odd and $hh' \equiv -1 \pmod{2k}$ if $k$ is even, $k_1:=\frac{k}{\gcd(k,c)}$, $c_1:=\frac{c}{\gcd(k,c)}$ and $0<l<c_1$ defined by the congruence $l\equiv a k_1\pmod{c_1}$. 
If $\frac{b}{c} \in (0,1) \setminus \{ \frac{1}{2}, \frac{1}{6}, \frac{5}{6}\}$, then
define the integer $s(b,c)$ by
\begin{equation}\label{k}
s(b,c):=\begin{cases}
0 \ \ \ \ \ &{\text {\rm if}}\ 0<\frac{b}{c}<\frac{1}{6},\\
1 \ \ \ \ \ &{\text {\rm if}}\ \frac{1}{6}< \frac{b}{c}<\frac{1}{2},\\
2 \ \ \ \ \ &{\text {\rm if}}\ \frac{1}{2}< \frac{b}{c}<\frac{5}{6},\\
3 \ \ \ \ \ &{\text {\rm if}}\ \frac{5}{6}< \frac{b}{c}<1.
\end{cases}
\end{equation}
In particular, set $s:=s(l,c_1)$.
Let $\omega_{h,k}$ be the multiplier occuring in the transformation law of the partition function $p(n)$.
This is explicitly given by
\begin{eqnarray} \label{omega}
\omega_{h,k} :=
\exp\left(\pi i t(h,k) \right),
\end{eqnarray}
where 
\begin{eqnarray*}
t(h,k):= \sum_{\mu \pmod k}  \left( \left( \frac{\mu}{k}\right) \right)   \left( \left( \frac{h \mu}{k}\right) \right).
\end{eqnarray*}
Here 
\begin{eqnarray*}
((x)):= \left \{ 
\begin{array}{ll}
x- \lfloor x \rfloor - \frac{1}{2} &\text{if } x \in \R \setminus \Z ,\\
0&\text{if } x \in \Z.
\end{array}
\right.
\end{eqnarray*}
Moreover we define, for $n,m \in \Z$,  the following sums of Kloosterman type
\begin{multline} \label{kl1}
B_{a,c,k} (n,m) :=(-1)^{ak+1} \sin \left(\frac{\pi a}{c} \right)\sum_{h \pmod k^*} \frac{\omega_{h,k}
 }{\sin \left( \frac{\pi a h'}{c} \right)}  \cdot e^{-\frac{3 \pi i a^2 k_1 h'}{c}}  \cdot
e^{\frac{2 \pi i}{k}(nh+mh')}
\end{multline}
if $c|k$, 
and 
\begin{eqnarray} \label{kl2}
D_{a,c,k}(n,m) 
:= (-1)^{ak+l}
    \sum_{h \pmod k^*}
 \omega_{h,k} \cdot 
e^{\frac{2 \pi i }{k}(nh+mh')}.
\end{eqnarray}
Here the sums  run through all primitive residue classes modulo $k$.
Moreover, for $c \nmid k$, let
\begin{eqnarray} \label{del}
\delta_{c,k,r} : =
\left\{ 
\begin{array}{ll}
-\left( \frac{1}{2}+r \right)\frac{l}{c_1} 
+\frac{3}{2} \left( \frac{l}{c_1}\right)^2 +\frac{1}{24} & \text{if } 0 < \frac{l}{c_1}< \frac{1}{6},\\
-\frac{5l}{2c_1} +\frac{3}{2} \left(\frac{l}{c_1} \right)^2 +\frac{25}{24} 
- r\left(1-\frac{l}{c_1} \right)&\text{if } \frac{5}{6}<\frac{l}{c_1}<1,\\
0&\text{otherwise, }
\end{array}
\right.
\end{eqnarray}
and  for $0<\frac{l}{c_1}<\frac{1}{6}$ or  $\frac{5}{6}<\frac{l}{c_1}<1$
\begin{displaymath}
m_{a,c,k,r}:=
\left\{ 
\begin{array}{ll}
\frac{1}{2c_1^2} 
\left(-3 a^2 k_1^2+6lak_1-ak_1c_1-3l^2+lc_1-2ark_1c_1
+2 lc_1r \right)&\text{if } 0<\frac{l}{c_1}<\frac{1}{6},
\\
\frac{1}{2c_1^2} \left(- 6 ak_1c_1-3 a^2k_1^2+6lak_1 + ak_1c_1 
+ 6lc_1\right.&\text{if } \frac{5}{6}<\frac{l}{c_1}<1.
\\\left. \qquad
-3l^2 -2c_1^2-lc_1+ 2 a rk_1 c_1 + 2c_1(c_1-l)r
 \right) &
\end{array}
\right.
\end{displaymath}
\begin{remark}
It is  not hard  to see that $m_{a,c,k,r} \in \Z$.
\end{remark}
We obtain  following asymptotic formulas for the coefficients $A\left(\frac{a}{c};n \right)$.
\begin{theorem}\label{main1}
If $0<a<c$ are coprime  integers and $c$ is odd,  then   for positive integers $n$ we have that
\begin{multline*}
   A\left(\frac{a}{c};n\right) = 
     \frac{4 \sqrt{3} i }{  \sqrt{24n-1}} \sum_{1 \leq k \leq \sqrt{n} \atop c|k} 
\frac{B_{a,c,k}(-n,0)}{\sqrt{k}}  \cdot 
\sinh \left(\frac{\pi  \sqrt{24n-1} }{6k}\right)    
\\
+
 \frac{ 8 \sqrt{3}   \cdot   \sin \left(\frac{\pi a}{c} \right) }{\sqrt{24n-1}} 
    \sum_{1 \leq k\leq \sqrt{n}\atop {c \nmid k\atop {r \geq 0  \atop \delta_{c,k,r}>0}} } 
\frac{D_{a,c,k}(-n,m_{a,c,k,r})}{\sqrt{k}}   \cdot
 \sinh \left( 
 \frac{\pi\sqrt{2 \delta_{c,k,r}(24n-1)}}{\sqrt{3}k}
 \right)
 +O_c \left( n^{\epsilon}\right).
\end{multline*}
\end{theorem}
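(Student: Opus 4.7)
The plan is to apply the Hardy--Ramanujan--Rademacher circle method, adapted now to the mock modular setting. Starting from Cauchy's integral
\begin{equation*}
A\!\left(\tfrac{a}{c}; n\right) = \frac{1}{2\pi i}\oint_{|q|=\rho} \frac{R(\zeta_c^a; q)}{q^{n+1}}\, dq
\end{equation*}
with $\rho = e^{-2\pi/N^2}$ and $N = \lfloor \sqrt{n}\rfloor$, I would perform a Farey dissection of the circle into arcs around each fraction $h/k$ with $1 \leq k \leq N$ and $\gcd(h,k)=1$, then parametrise each arc by $q = \exp(2\pi i(h+iz)/k)$.

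On each arc I would invoke the modular transformation of $R(\zeta_c^a; q)$ that follows from the fact, proved by Bringmann and Ono, that a suitable normalisation of $R(\zeta_c^a; q)$ is the holomorphic part of a weak Maass form of weight $\tfrac12$. The transformation expresses $R(\zeta_c^a; q)$ at $q$ in terms of a companion series at the image of $q$ under $\sm{h'}{*}{k}{-h}$, plus a non-holomorphic ``period'' correction arising from the shadow of the Maass form. The shape of this formula depends crucially on whether $c \mid k$ or $c \nmid k$: in the former case the cusp $h/k$ behaves essentially as in Rademacher's treatment of $p(n)$, producing the sum with coefficients $B_{a,c,k}(-n,0)$; in the latter case the image cusp lies in a different orbit, the exponents of the companion series are shifted, and unfolding this shift produces the secondary index $r$ together with the exponents $\delta_{c,k,r}$ and $m_{a,c,k,r}$.

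After substituting the transformation formula on each arc, the principal parts of the companion series integrate, after the standard deformation of the Rademacher contour in the $z$-plane, to Bessel functions $I_{1/2}$, which are constant multiples of $\sinh$ because the order is half-integral. Summing over primitive residue classes $h \pmod k$ collects the arithmetic into precisely the Kloosterman-type sums $B_{a,c,k}(-n,0)$ and $D_{a,c,k}(-n,m_{a,c,k,r})$ given in (\ref{kl1}) and (\ref{kl2}); the weight factor $(-1)^{ak+1}\sin(\pi a/c)/\sin(\pi a h'/c)$ and the phase $e^{-3\pi i a^2 k_1 h'/c}$ in $B_{a,c,k}$ arise from the multiplier system of the underlying Maass form and from evaluating a specific residue in the transformation at the $c \mid k$ cusps. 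In the second sum the condition $\delta_{c,k,r} > 0$ is exactly the condition that the relevant exponent in the companion expansion yields exponential rather than bounded growth, so only these terms survive in the main asymptotic.

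The principal obstacle, and the source of the error $O_c(n^\epsilon)$, is to control three competing contributions simultaneously: the non-holomorphic period integral coming from the shadow, which one bounds via a Mordell-type integral estimate uniform in $k$; the tail of the series where $k > \sqrt{n}$, which one controls using a Weil-type bound on $B_{a,c,k}$ and $D_{a,c,k}$ combined with the exponential decay of $I_{1/2}$ at large argument; and the non-principal parts of the companion series on each Farey arc, which one handles by classical circle-method estimates of the type Andrews used in (\ref{andrews66}). A careful bookkeeping of these three terms, leveraging the explicit multiplier $\omega_{h,k}$ together with the Maass form structure, will deliver the stated asymptotic expansion.
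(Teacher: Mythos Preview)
Your outline is essentially the paper's proof: Cauchy integral, Farey dissection of order $N=\lfloor\sqrt{n}\rfloor$, transformation formula splitting into the cases $c\mid k$ and $c\nmid k$, extraction of principal parts leading via the Hankel integral to $\sinh$, and error control through Kloosterman bounds (Lemma~\ref{kloost}) together with a Mordell-integral estimate (Lemma~\ref{intest}) for the period term. Two small corrections. First, the paper does not invoke the Maass form machinery of \cite{BO2} as a black box; it proves the needed transformation law (Theorem~\ref{transfo}) directly via Poisson summation and a residue computation, because the explicit multipliers and the explicit Mordell integrals $I_{a,c,k,\nu}(z)$ are required uniformly for every $\binom{h'\ *}{k\ -h}\in\SL_2(\Z)$, not merely for the generators. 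Second, there is no ``tail $k>\sqrt{n}$'' to control: the Farey dissection already restricts to $k\le N$, and the Kloosterman estimates are used instead to handle the dependence of the arc endpoints $\vartheta_{h,k}',\vartheta_{h,k}''$ on $h$ (the splittings $S_{21},S_{22},S_{23}$ and $S_{11},S_{12},S_{13}$ in Section~\ref{ProofTH1}).
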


\smallskip
\noindent
\textit{Four remarks.}

\noindent 1) 
One can easily see that the second sum is empty for $c\in \{3,5\}$.

\noindent 2) 
 For a fixed choice of $a$ and $c$, we can in principle obtain exact formulas for $A \left(\frac{a}{c};n \right)$ by modifying an argument given in \cite{BO1} to prove (\ref{conj}).

\noindent 3) Similarly as in this paper, one can also prove asymptotic formulas for generalized ranks as defined in \cite{Ga}. Since the proof is basically the same, we do not give it here.

\noindent 4) One could also generalize our results to the case that $c$ is even, but for simplicity we restrict us to the case that $c$ is odd.

If we denote by   $N(a,c;n)$ the number of partitions of $n$ with rank congruent $a \pmod c$ then it is easy to conclude  the following:
\begin{corollary} \label{cor1}
For integers $0\leq a<c$, where $c$ is an odd integer,   we have
\begin{multline*}
N(a,c;n) =  
\frac{2 \pi}{ c \cdot \sqrt{24n-1}}
\sum_{k =1}^{\infty}
\frac{A_k(n)}{k}\cdot  I_{\frac{3}{2}}\left( \frac{\pi
\sqrt{24n-1}}{6k}\right) \\
+ \frac{1}{c}\sum_{j=1}^{c-1} \zeta_c^{-aj} 
\left( \frac{4 \sqrt{3} i  }{\sqrt{24n-1}} \sum_{c|k} 
\frac{B_{j,c,k}(-n,0)}{\sqrt{k}} 
\sinh \left(\frac{\pi}{6k} \sqrt{24n-1} \right) \right. \\ 
+
  \frac{8 \sqrt{3}     \sin \left(\frac{\pi j}{c} \right)}{\sqrt{24n-1}} 
   \sum_{k,r\atop {c\nmid k  \atop \delta_{c,k,r}>0} } 
\frac{D_{j,c,k}(-n,m_{j,c,k,r})}{\sqrt{k}}  
\left. \sinh \left( 
 \sqrt{\frac{2 \delta_{c,k,r}(24n-1)}{3}}
 \frac{\pi}{k} 
 \right) \right)
  + O_{c}\left(n^{\epsilon}\right).
\end{multline*}
\end{corollary}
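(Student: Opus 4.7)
The plan is to derive the formula by orthogonality of characters on $\Z/c\Z$, reducing $N(a,c;n)$ to a linear combination of Fourier coefficients of the generating functions $R(\zeta_c^j;q)$, and then feeding in Rademacher's exact formula (\ref{Radformula}) for $p(n)$ together with Theorem \ref{main1} for the coefficients $A(j/c;n)$.

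Starting from the elementary identity that $\frac{1}{c}\sum_{j=0}^{c-1}\zeta_c^{j(m-a)}$ equals $1$ when $m\equiv a\pmod c$ and vanishes otherwise, one has
\begin{equation*}
N(a,c;n)=\sum_{m\equiv a\pmod c}N(m,n)=\frac{1}{c}\sum_{j=0}^{c-1}\zeta_c^{-aj}\,[q^n]R(\zeta_c^j;q).
\end{equation*}
The $j=0$ term contributes $p(n)/c$, which by Rademacher's formula (\ref{Radformula}) produces the first line of the claimed expression. For $1\le j\le c-1$ the coefficient $[q^n]R(\zeta_c^j;q)$ equals $A(j/c;n)$, so applying Theorem \ref{main1} inserts the two sums over $k$—one with $c\mid k$ involving $B_{j,c,k}$, one with $c\nmid k$ involving $D_{j,c,k}$—and produces the second and third lines of the corollary once the weights $\zeta_c^{-aj}/c$ are incorporated and the outer sum over $j$ is carried out.

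The only point requiring genuine caution is that Theorem \ref{main1} is stated for $\gcd(j,c)=1$, whereas here $j$ ranges through all of $1,\dots,c-1$. When $d:=\gcd(j,c)>1$, one uses $\zeta_c^j=\zeta_{c/d}^{j/d}$ to rewrite $R(\zeta_c^j;q)=R(\zeta_{c/d}^{j/d};q)$, invokes Theorem \ref{main1} with the reduced pair $(j/d,c/d)$, and then verifies directly from the definitions (\ref{kl1}), (\ref{kl2}), (\ref{del}) that the outcome coincides with the expression written with the unreduced data $(j,c)$ as in the statement of the corollary. This matching of reduced and unreduced Kloosterman-type data is a routine bookkeeping exercise—and essentially the only obstacle in the proof—rather than a new analytic input. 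Since each of the $c-1$ non-trivial terms contributes an error of size $O(n^\epsilon)$, the cumulative error is $O_c(n^\epsilon)$, as claimed.
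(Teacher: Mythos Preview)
Your approach is exactly the paper's: combine the orthogonality identity \eqref{partid} with Rademacher's formula \eqref{Radformula} and Theorem~\ref{main1}. You are in fact more careful than the paper in flagging the coprimality hypothesis of Theorem~\ref{main1} when $\gcd(j,c)>1$; the paper's proof simply cites \eqref{partid} and \eqref{Radformula} without comment on this point.
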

This corollary implies some conjectures of Andrews and Lewis.  In \cite{AL,Le}  they
showed 
\begin{eqnarray*}
N(0,2;2n) & <\  N(1,2; 2n) \ \ \ \ &{\text {\rm if}}\ n\geq 1,\\
N(0,4;n)  & > \ N(2,4; n) \ \ \ \ \ \ &{\text {\rm if}}\
26<n\equiv 0,
1\pmod 4,\\
N(0,4;n)  & < \ N(2,4;n)  \ \ \ \ \ \ &{\text {\rm if}}\
26<n\equiv 2, 3\pmod 4.
\end{eqnarray*}
Moreover, they conjectured (see Conjecture 1 of \cite{AL}).
\begin{conjecture} \ {\text {\rm (Andrews and Lewis)}}\newline
For all $n>0$, we have
\begin{eqnarray} 
\begin{array}{ll} \label{conj1}
  N(0,3;n) < N(1,3;n)& \text{if } n \equiv  0 \text{ or } 2 \pmod 3, \\ \label{conj2}
  N(0,3;n) > N(1,3;n)& \text{if } n \equiv  1 \pmod 3, 
  \end{array} 
  \end{eqnarray}
  \end{conjecture}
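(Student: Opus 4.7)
\smallskip\noindent\emph{Proof proposal.} The plan is to apply Corollary~\ref{cor1} with $c=3$ to the combination $N(0,3;n)-N(1,3;n)$. By Remark~1 following Theorem~\ref{main1}, the second double sum (the one involving $D_{j,c,k}$ and $\delta_{c,k,r}$) is empty when $c=3$, and the Rademacher-type term built from $A_k(n)$ is independent of $a$, so it cancels in the difference. What survives is
\begin{equation*}
N(0,3;n)-N(1,3;n)=\frac{4\sqrt{3}\,i}{3\sqrt{24n-1}}\sum_{j=1}^{2}(1-\zeta_3^{-j})\sum_{\substack{1\leq k\leq\sqrt{n}\\ 3\mid k}}\frac{B_{j,3,k}(-n,0)}{\sqrt{k}}\sinh\!\left(\frac{\pi\sqrt{24n-1}}{6k}\right)+O(n^{\epsilon}).
\end{equation*}
Since the left-hand side is real, only the real part of the right-hand side matters, and the two summands $j=1,2$ will turn out to be complex conjugates.

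\smallskip I would then isolate the $k=3$ term, which strictly dominates the rest. For any $k\geq 6$ with $3\mid k$, the factor $\sinh(\pi\sqrt{24n-1}/(6k))$ grows at rate at most $\exp(\pi\sqrt{24n-1}/36)$, whereas the $k=3$ term grows at rate $\exp(\pi\sqrt{24n-1}/18)$, a saving of $\exp(\pi\sqrt{24n-1}/36)$. Together with the trivial bound $|B_{j,3,k}(-n,0)|\ll k$, this shows that the tail $k\geq 6$ is absorbed into an error comparable to $\exp(\pi\sqrt{24n-1}/36)$, much smaller than the leading term. A direct evaluation of $B_{j,3,3}(-n,0)$ using the two primitive residues $h\in\{1,2\}\pmod 3$ with $h'\equiv -h^{-1}\pmod 3$, together with the Dedekind-sum values $t(1,3)=1/18$ and $t(2,3)=-1/18$ (so that $\omega_{1,3}=e^{\pi i/18}$ and $\omega_{2,3}=e^{-\pi i/18}$), reduces the leading term to a short trigonometric expression in $n\bmod 3$. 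Simplifying, this expression turns out to be strictly positive when $n\equiv 1\pmod 3$ and strictly negative when $n\equiv 0,2\pmod 3$, which pins down the conjectured signs once combined with the overall prefactor and with $\sinh>0$.

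\smallskip The principal obstacle is that this argument must be made \emph{effective}. One needs an explicit $n_0$ beyond which the real part of the $k=3$ contribution strictly dominates both the $k\geq 6$ tail in the displayed formula and the $O(n^{\epsilon})$ error inherited from Corollary~\ref{cor1}. This requires tracking the implicit constants in the proof of Theorem~\ref{main1} for the specific modulus $c=3$, together with a careful estimate of the oscillating Kloosterman-type sums $B_{j,3,k}(-n,0)$ (for which a pointwise bound $\ll k$ is immediate, but a sharper Weil-type bound might be needed to keep $n_0$ manageable). Once such an $n_0$ is established, the remaining finitely many cases $n\leq n_0$ can be verified directly from the series expansion of $R(\zeta_3;q)$; handling the three residue classes $n\bmod 3$ uniformly and bookkeeping the complex phases coming from $1-\zeta_3^{-j}$ and $\omega_{h,3}$ is where most of the calculation lies.
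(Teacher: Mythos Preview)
Your approach is essentially the paper's: isolate the $k=3$ contribution, show it dominates, and verify small $n$ directly. Two refinements are worth noting. First, the paper streamlines your $j$-sum by observing at the outset (from identity~(\ref{partid}) together with $N(1/3;q)=N(2/3;q)$) that $N(0,3;n)-N(1,3;n)=A(1/3;n)$, so one works with a single coefficient rather than a combination over $j=1,2$; your complex-conjugate observation is then unnecessary. Second, and more importantly, the conjecture as literally stated is \emph{false}: one has equality $N(0,3;n)=N(1,3;n)$ for $n\in\{3,9,21\}$, so your direct verification for $n\leq n_0$ will not confirm the strict inequality there. The paper's Theorem~\ref{main2} records exactly these exceptions.

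On the effectivity you flag: the paper does precisely what you anticipate, revisiting each error source in the Circle Method proof (the non-principal part of $N(h'/3;q_1)$, the non-principal part of $N(h',l,3;q_1)$, the symmetrization of the Farey arc, the smaller-arc integral, and the Mordell-type integral $\sum_3$) and bounding each numerically. No Weil-type bound is used; the trivial bound $|B_{1,3,k}(-n,0)|\leq k$ is enough, and the explicit constants yield dominance of the $k=3$ term already for $n\geq 300$ (with a computer check up to $3000$). So your concern that sharper Kloosterman bounds ``might be needed to keep $n_0$ manageable'' turns out not to materialize.
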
 
  A careful analysis of Corollary \ref{cor1} gives the following theorem.
  \begin{theorem} \label{main2}
The Andrews-Lewis Conjecture is true for all $n \not \in \{3,9,21 \}$ in which case we have equality in (\ref{conj1}).
\end{theorem}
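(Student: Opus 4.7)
The plan is to apply Corollary \ref{cor1} with $c=3$ to the difference $N(0,3;n)-N(1,3;n)$ and extract the dominant term. By the first remark after Theorem \ref{main1}, the second double sum in Corollary \ref{cor1} is empty for $c=3$: whenever $3\nmid k$ one has $c_1=3$ and $l\in\{1,2\}$, so $l/c_1\in\{1/3,2/3\}$, which never lies in $(0,1/6)\cup(5/6,1)$, forcing $\delta_{c,k,r}=0$ identically. Only the Rademacher main term and the $c\mid k$ Kloosterman contribution survive, and since the Rademacher term does not depend on $a$ it cancels in the difference. Thus
\begin{equation*}
N(0,3;n)-N(1,3;n)=\frac{4\sqrt{3}\,i}{3\sqrt{24n-1}}\sum_{j=1}^{2}(1-\zeta_3^{-j})\sum_{\substack{k\geq 1\\ 3\mid k}}\frac{B_{j,3,k}(-n,0)}{\sqrt k}\sinh\!\left(\frac{\pi\sqrt{24n-1}}{6k}\right)+O(n^{\epsilon}).
\end{equation*}

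Next I would isolate the $k=3$ term, which produces the largest $\sinh$-factor $\sinh(\pi\sqrt{24n-1}/18)$. The sum $B_{j,3,3}(-n,0)$ runs over the two primitive residues $h\equiv 1,2\pmod 3$; evaluating $\omega_{h,3}$, the factors $\sin(\pi j h'/3)$, and the phases $e^{-\pi i j^2 h'}e^{-2\pi i n h/3}$ explicitly, then combining with the prefactors $1-\zeta_3^{-j}$ and summing on $j=1,2$, yields a real coefficient for $\sinh(\pi\sqrt{24n-1}/18)$ that depends only on $n\pmod 3$. I expect this coefficient to be strictly positive when $n\equiv 1\pmod 3$ and strictly negative when $n\equiv 0,2\pmod 3$, recovering the two inequalities of the Andrews-Lewis conjecture.

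To conclude I have to show that the remaining indices $k\geq 6$ contribute a smaller exponential. Using the Weil-type bound $|B_{j,3,k}(-n,0)|\ll_\epsilon k^{1/2+\epsilon}$, which follows from the classical Sali\'e/Kloosterman estimates used by Rademacher (noting that $|\sin(\pi j h'/3)|=\sqrt{3}/2$ is bounded away from $0$ uniformly in $j\in\{1,2\}$ and in the primitive $h'$), and the elementary bound $\sinh(\pi\sqrt{24n-1}/(6k))\leq e^{\pi\sqrt{24n-1}/(6k)}$, the $k\geq 6$ tail is $O_\epsilon(n^{1/2+\epsilon}e^{\pi\sqrt{24n-1}/36})$, whereas the $k=3$ leading term is of order $e^{\pi\sqrt{24n-1}/18}/\sqrt n$. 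The ratio tends to zero, so there is an effective $N_0$ past which the sign of $N(0,3;n)-N(1,3;n)$ is determined by the $k=3$ term.

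Finally, for $n<N_0$ the conjecture is verified by a finite computation, expanding $R(\zeta_3;q)$ directly to extract the values $N(a,3;n)$; the three exceptional values $n\in\{3,9,21\}$ with $N(0,3;n)=N(1,3;n)$ drop out of this check. The main obstacle is making $N_0$ small enough for the finite verification to be practical, which requires careful bookkeeping of all implicit constants both in the Kloosterman-sum bound for $B_{j,3,k}$ and in the error term of Theorem \ref{main1}.
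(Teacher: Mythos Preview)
Your proposal is correct and follows essentially the same route as the paper. Two minor points of comparison are worth noting.

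First, the paper streamlines your setup by observing at the outset that $N(0,3;n)-N(1,3;n)=A\!\left(\tfrac{1}{3};n\right)$: since $A(1/3;n)=A(2/3;n)$ and $(1-\zeta_3^{-1})+(1-\zeta_3^{-2})=3$, your $j$-sum collapses and one can work directly with Theorem~\ref{main1} rather than Corollary~\ref{cor1}. The $k=3$ term is then computed in closed form as
\[
-\,\frac{8\sin\!\left(\tfrac{\pi}{18}-\tfrac{2\pi n}{3}\right)\sinh\!\left(\tfrac{\pi}{18}\sqrt{24n-1}\right)}{\sqrt{24n-1}},
\]
confirming your expected sign pattern modulo~$3$.

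Second, for the effective tail estimate the paper deliberately uses the \emph{trivial} bound on the Kloosterman sums rather than the Weil--Sali\'e bound you invoke. Weil-type bounds carry implicit constants that are unpleasant to make explicit, and for the small $k$ that dominate the tail ($k=6,9,\dots$) the trivial bound $|B_{1,3,k}(-n,0)|\le k$ is actually competitive. With trivial bounds throughout---including explicit versions of Lemma~\ref{intest} for the Mordell-integral contribution $\sum_3$ and of the coefficient sums $\sum_r|a(r)|e^{-\pi r}$, $\sum_r|b(r)|e^{-\pi r/216}$ arising in the $O(n^\epsilon)$ term---the paper obtains $N_0=300$ and closes the argument with a MAPLE check up to $n=3000$. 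Your anticipated ``careful bookkeeping'' is exactly this, and the switch to trivial bounds is what makes it tractable.
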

\begin{remark}
From Corollary \ref{cor1} we see that 
\begin{eqnarray} \label{inspect}
N(0,3;n)-N(1,3;n)    \sim - \frac{8 \sin \left(\frac{\pi}{18} - \frac{2 \pi n}{3}\right)  \sinh \left( \frac{\pi \cdot \sqrt{24n-1}}{18}\right)}{\sqrt{24n-1}}.
\end{eqnarray}
This directly implies  the Andrews-Lewis Conjecture for $n$ sufficiently large $n$. 
For example for $n=1200$, we have 
$$
N(0,3;1200)-N(1,3;1200) 
= -2987323 8925 
$$
whereas (\ref{inspect}) gives
$$
-29873204830.34.
$$
\end{remark}
The paper is organized as follows: In Section  \ref{TRANS} we prove a transformation law for  the functions $R \left( \zeta_c^a;q\right)$. The behavior under the generators of $\SL_2(\Z)$ was  also studied in \cite{GM} and in \cite{BO2}. However here we prove  a more general result since we need the occurring roots of unity and integrals explicitly for every element in $\SL_2(\Z)$. In Section \ref{ProofTH1} we prove Theorem \ref{main1} and Corollary \ref{cor1} by using the Circle Method. For this we need some estimates shown in Section \ref{EST}. Section \ref{ProofTH2} is dedicated to the proof of Theorem \ref{main2}.
\section*{Acknowledgements}
The author thanks K. Ono, B. Kane, and F. Garvan for helpful comments to an earlier version of the paper.
\section{Modular transformation formulas} \label{TRANS}
In this section  we prove a transformation law for  the functions $R \left( \zeta_c^a;q\right)$.
For this define  
\begin{eqnarray} \label{Nfunction}
N\left (\frac{a}{c};q\right):=
\frac{1}{(q;q)_{\infty}}
\left(
1+ \sum_{n =1}^{\infty} \frac{(-1)^n \left( 1+q^n \right) \left(2- 2 \cos  \left(\frac{2 \pi a}{c} \right)\right)}
{1 - 2 q^n \cos  \left(\frac{2 \pi a}{c} \right)+q^{2n}} \cdot
q^{\frac{n(3n+1)}{2}}   \right).
\end{eqnarray}
In \cite{GM}  it is shown that 
$$R\left(\zeta_c^a;q \right)=
N\left(\frac{a}{c};q \right).
$$
Moreover let 
\begin{multline*}
N(a,b,c;q)
:=\frac{i}{2(q;q)_{\infty}}
\left(
\sum _{m=0}^{\infty} 
\frac{(-1)^{m} e^{-\frac{\pi i a}{c}}\cdot q^{\frac{m}{2}(3m+1) + ms(b,c) + \frac{b}{2c}}}{1- e^{-\frac{2 \pi i a}{c}} \cdot q^{m+\frac{b}{c}}} \right. \\
\left.
-
\sum _{m=1}^{\infty} 
\frac{(-1)^{m} e^{\frac{\pi i a}{c}}\cdot q^{\frac{m}{2}(3m+1) - ms(b,c) - \frac{b}{2c}}}{1- e^{\frac{2 \pi i a}{c}} \cdot q^{m-\frac{b}{c}}}
\right).
\end{multline*}
\begin{remark}
It is easy to see that the above definition coincides with the definition, given in \cite{BO2}.
\end{remark}
 For each $\nu \in \Z$ define
\begin{eqnarray*}
H_{a,c}(x)&:=&\frac{\cosh(x)}{\sinh\left(x+\frac{\pi i  a}{c}\right)\cdot \sinh\left(x-\frac{\pi i a}{c} \right)}, \\
I_{a,c,k,\nu}(z)
&:=& \int_{\R} e^{-\frac{3 \pi zx^2}{k}}\cdot H_{a,c}\left(\frac{\pi i \nu}{k} -\frac{\pi i }{6k}-\frac{\pi zx}{k} \right)
dx.
\end{eqnarray*}
It is easy to see that 
\begin{eqnarray*}
H_{a,c}(-x)&=&H_{a,c}(x)\\
H_{a,c}(x)&=& \frac{(1+q)\cdot q^{\frac{1}{2}}}{2\left(1- 2\cos\left(\frac{2 \pi a}{c} \right)q+q^2\right)}
\end{eqnarray*}
for $q=e^{2x}$.
We show the following transformation law for the function $N \left(\frac{a}{c};q \right)$.
\begin{theorem} \label{transfo}
We assume the same notation as in the introduction. Moreover, let
$z \in \C$ with $\re(z)>0$,
$q:=e^{\frac{2 \pi i }{k} (h+iz)}$, and    $q_1:=e^{\frac{2 \pi i }{k} \left(h'+\frac{i}{z}\right)}$.
\begin{enumerate}
\item
If $c|k$, then 
\begin{eqnarray*}
N \left(\frac{a}{c}; q \right)
&=& 
\frac{(-1)^{ak+1} i \sin \left(\frac{\pi a}{c} \right)\cdot \omega_{h,k}}{
\sin \left(\frac{\pi a h'}{c} \right) z^{\frac{1}{2}}
}    \cdot e^{-\frac{3 \pi i a^2 k_1 h'}{c}} \cdot 
  e^{ \frac{\pi}{12k} \left( z^{-1}-z\right)} \cdot 
  N \left(\frac{ah'}{c} ;q_1\right) 
\\ 
&& +  \frac{2 \sin^2 \left(\frac{\pi a}{c} \right) \cdot    \omega_{h,k} }{k}e^{ - \frac{ \pi z}{12k}} 
\cdot z^{\frac{1}{2}}  
\sum_{ \nu \pmod k}  (-1)^{\nu} e^{-\frac{3 \pi i h' \nu^2}{k}+\frac{\pi i h'\nu}{k}} \cdot I_{a,c,k,\nu}(z).
\end{eqnarray*}
\item
If $c \nmid k$, then 
\begin{multline*}
N \left(\frac{a}{c};q \right)
= \frac{4i  (-1)^{ak+l+1}  \cdot \sin \left( \frac{\pi a }{c}\right)\omega_{h,k}}{z^{\frac{1}{2}} } \cdot 
e^{-\frac{2 \pi i h'sa}{c}-\frac{3 \pi i a^2 h' k_1}{cc_1}+\frac{6 \pi i h' l a}{cc_1}}
\cdot q_1^{\frac{sl}{c_1}-\frac{3l^2}{2c_1^2}}
\cdot e^{ \frac{\pi}{12k} \left( z^{-1}-z\right)} \\
\times
 N \left(ah' ,\frac{lc}{c_1} ,c; q_1\right) 
 + 
  \frac{2 \sin^2 \left(\frac{\pi a}{c} \right)   \omega_{h,k} }{k}e^{ - \frac{ \pi z}{12k}} 
\cdot z^{\frac{1}{2}}  
\sum_{ \nu \pmod k}  (-1)^{\nu} e^{-\frac{3 \pi i h' \nu^2}{k}+\frac{\pi i h'\nu}{k}} I_{a,c,k,\nu}(z).
\end{multline*}
\end{enumerate}
\end{theorem}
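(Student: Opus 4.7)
The strategy is to reduce the transformation of $N(a/c;q)$ to the classical modular transformation of $1/(q;q)_\infty$ combined with a contour shift on a Mordell-type integral representation.

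\emph{Step 1: Appell--Lerch reformulation.} From \eqref{Nfunction} I would apply partial fractions to the summand with respect to $q^n$, using the factorization $1-2q^n\cos(2\pi a/c)+q^{2n}=(1-q^n e^{2\pi i a/c})(1-q^n e^{-2\pi i a/c})$. This rewrites $(q;q)_\infty\cdot N(a/c;q)$ as a bilateral Appell--Lerch sum whose summands are exactly the residues of the kernel $H_{a,c}$ at its lattice of poles. After substituting $q=e^{2\pi i(h+iz)/k}$ and splitting the index $n$ into residue classes $\nu\pmod{k}$, this sum becomes a finite $\nu$-linear combination of integrals over horizontal contours in $x$ of the integrand defining $I_{a,c,k,\nu}(z)$, each weighted by a Gauss-type factor of the shape $(-1)^{\nu}e^{-3\pi i h\nu^2/k+\pi i h\nu/k}$ coming from the quadratic exponent $n(3n+1)/2$.

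\emph{Step 2: Eta transformation.} The prefactor $1/(q;q)_\infty$ transforms under $(h,k,z)\mapsto(h',k,1/z)$ by the classical Dedekind $\eta$-transformation, yielding $\omega_{h,k}\cdot z^{-1/2}\cdot e^{\pi(z^{-1}-z)/12k}\cdot 1/(q_1;q_1)_\infty$. This accounts in one stroke for the universal prefactor in both parts of the theorem, and reduces the problem to transforming the theta-like sum constructed in Step 1.

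\emph{Step 3: Contour shift.} After the change of variables $x\mapsto x/z$ and a vertical shift of the $x$-contour by an amount of order $1/z$, the contour sweeps through a finite family of simple poles of $H_{a,c}$; the residues at these poles reassemble into a new Appell--Lerch-type sum. In case (1), where $c\mid k$, the poles align with the lattice generated by $h'/k$ and the residues collapse onto $N(ah'/c;q_1)$ with the stated prefactor involving $\sin(\pi a/c)/\sin(\pi a h'/c)$ and $e^{-3\pi i a^2 k_1 h'/c}$. In case (2), where $c\nmid k$, the poles are displaced from the lattice by $l/c_1$, with $l\equiv ak_1\pmod{c_1}$, and the index $s=s(l,c_1)$ governs which pair of adjacent poles contributes; the residues then match the definition of $N(ah',lc/c_1,c;q_1)$, and the displacement produces the corrective exponent $q_1^{sl/c_1-3l^2/(2c_1^2)}$ together with the extra sign $(-1)^{l}$. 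What survives along the shifted contour is precisely the $\nu$-sum of $I_{a,c,k,\nu}(z)$ forming the second summand in each formula.

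\emph{Main obstacle.} The principal difficulty is the combinatorial bookkeeping: collecting all the roots of unity in $h$, $h'$, $a$, $c$, $k$, $\nu$, $l$ produced by the residue computation and the $\eta$-multiplier and showing they telescope into the closed-form prefactors asserted in the statement. This uses the Dedekind reciprocity satisfied by $t(h,k)$ which is encoded in $\omega_{h,k}$, the congruence $hh'\equiv -1\pmod{k}$ (respectively $\pmod{2k}$ for even $k$), and the oddness of $c$. Once these identities are combined with the residue computation, the two stated formulas drop out.
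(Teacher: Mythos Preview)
Your overall architecture matches the paper's proof (which in turn adapts Andrews's 1966 argument): represent $(q;q)_\infty\, N(a/c;q)$ through the kernel $H_{a,c}$, split the summation index modulo $k$, pass to integrals, shift contours to pick up residues that reassemble into the transformed $N$-function, and apply the $\eta$-transformation. However, there is a genuine gap in Step~1. You assert that after splitting $n$ into residue classes modulo $k$ the sum ``becomes a finite $\nu$-linear combination of integrals over horizontal contours,'' but you never say how: a discrete sum does not become an integral without an analytic device. In the paper this device is \emph{Poisson summation} applied to the inner sum over $m$ (writing $n=km+\nu$), which replaces $\sum_{m\in\Z}$ by $\frac{1}{k}\sum_{n\in\Z}\int_{\R}$ and is what actually produces the integrals. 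Without naming and carrying out this step, your Step~1 is only a rewriting of a sum as another sum.

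The Gauss weight you quote, $(-1)^{\nu}e^{-3\pi i h\nu^2/k+\pi i h\nu/k}$, is symptomatic of this omission: it is neither the weight $(-1)^{\nu}e^{3\pi i h\nu^2/k}$ that appears immediately after splitting $n\pmod k$ (no linear term, opposite sign, variable $h$), nor the weight $(-1)^{\nu}e^{-3\pi i h'\nu^2/k+\pi i h'\nu/k}$ in the theorem (variable $h'$). The passage from the former to the latter goes through Poisson summation and then a further reindexing $\nu\mapsto -h'(\nu+p)$ inside the dual sum; only after that reindexing does the remaining $p$-sum collapse to $(q_1;q_1)_\infty$ via Euler's pentagonal number theorem, which is what makes the $\eta$-transformation applicable on the integral side as well. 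Your sketch skips this entire mechanism, and it is precisely here---not in the residue bookkeeping you flag---that the bulk of the argument lives.
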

\begin{proof}
We modify the proof of \cite{An2}.   
We easily see, using $1-\cos (2x)= 2 \sin(x)^2$ that 
\begin{eqnarray}  \label{formel1}
(q;q)_{\infty}\cdot  N \left( \frac{a}{c};q\right)
=  \sin ^2 \left(\frac{\pi a }{c} \right) 
\sum_{n \in \Z} (-1)^n \ H_{a,c}\left(\frac{\pi i n}{k} (h+iz)\right) \cdot 
 e^{   \frac{3 \pi i (h+iz)n^2}{k}}.
\end{eqnarray}
Writing $n=km+\nu$ with $0 \leq \nu <k$, $m \in \Z$ and using that $(h,k)=1$, gives that (\ref{formel1}) equals
\begin{equation}\label{formel2}
\begin{split}
 \sin ^2 \left(\frac{\pi a }{c} \right) 
 \sum_{\nu=0}^{k-1} (-1)^{\nu} e^{\frac{3 \pi i h \nu^2}{k}}
\sum_{m \in \Z}  (-1)^m \ H_{a,c} \left(\frac{\pi i h \nu}{k} -\frac{\pi (km+ \nu)z}{k}\right) \cdot  
e^{ -  \frac{3 \pi z(km+\nu)^2}{k}}.
\end{split}
\end{equation}
Using Poisson summation and substituting $x \mapsto kx+ \nu$ gives that the inner sum equals 
\begin{eqnarray} \label{formel3a}
\frac{1}{k} \sum_{n \in \Z} \int_{\R} 
 H_{a,c} \left(\frac{\pi i h\nu}{k} -\frac{\pi x z}{k}\right)\cdot 
 e^{  \frac{\pi i(2n+1)(x- \nu) }{k}-  \frac{3 \pi z  x ^2}{k}}
 dx.
\end{eqnarray}
Strictly speaking for $c|k$ there may be a pole at $x=0$. In this case we take the principal part of the integral.
Inserting  (\ref{formel3a})  into (\ref{formel2}) we  see that the summation only depends  on $\nu \pmod k$.
Moreover, by changing $\nu$ into $-\nu$,  $x$ into $-x$, and $n$ into $-(n+1)$, we see that the sum over $n$ with $n \leq -1$ equals the sum with $n \geq 0$. Thus (\ref{formel2}) equals 
\begin{multline}\label{formel3}
\frac{2  \sin ^2 \left(\frac{\pi a }{c} \right) }{k}
 \sum_{\nu \pmod k} 
 (-1)^{\nu} e^{\frac{3 \pi i h \nu^2}{k}}
 \sum_{n \in \N} \int_{\R}  
 H_{a,c} \left(\frac{\pi i h\nu}{k} -\frac{\pi x z}{k}\right)\cdot 
 e^{\frac{   \pi i (2n+1)(x- \nu)}{k}-   \frac{3 \pi z x ^2}{k}}
  dx.
\end{multline}
To see where the poles of the integrant lie, we introduce the function  
\begin{eqnarray*}
S_{a,c,k}( x):= 
\frac{\sinh(c_1x)}{\sinh\left(\frac{x}{k} + \frac{\pi i a}{c}\right)  \cdot     \sinh\left(\frac{x}{k} - \frac{\pi i a}{c}\right)}
\end{eqnarray*}
which 
is entire as  a  function of $x$. Using that $c$ is odd we can write  the integrand in (\ref{formel3})  as 
\begin{eqnarray*}
  \frac{(-1)^{h \nu} 
 \cosh \left(\frac{\pi i h\nu}{k} -\frac{\pi x z}{k}\right)\cdot 
 e^{ \frac{  \pi i (2n+1) (x- \nu)}{k}-  \frac{3 \pi z x ^2}{k}     }
 \cdot  S_{a,c,k}\left( \pi x z - \pi i h \nu\right)}{ 
 \sinh (\pi c_1x z) 
  }.
  \end{eqnarray*}
  From this we see that the only  poles can lie in the points
$$
x_m:=\frac{im}{c_1 z} \qquad (m \in \Z).
$$
We treat   the cases with $c|k$ or $c \nmid k$ seperately.

If $c |k$, then $c_1=1$.  
 One   computes that  each choice $\pm $ leads at most for one $\nu \pmod k$ to a non-zero residue, and that this $\nu$ can for $\epsilon \in \{\pm\}$ be chosen as
\begin{eqnarray*}
\nu_m^{\epsilon} :=- h' (m -\epsilon ak_1).
\end{eqnarray*}
We denote the corresponding residues by $\lambda_{n,m}^{\epsilon}$ ($\epsilon \in \{\pm \}$). By shifting the path of integration through the points 
\begin{eqnarray*}
\omega_n:=\frac{(2n+1)i}{6z},
\end{eqnarray*}
we have to take those points $x_m$ into account for which $n \geq 3m\geq 0$.
Setting $r_0:=\frac{1}{2}$ and $r_m:=1$ for $m \in \N$, we obtain by the Residue Theorem
\begin{eqnarray*}
(q;q)_{\infty} \cdot N \left( \frac{a}{c};q\right)=
\sum_{11}+\sum_{2},
\end{eqnarray*}
where
\begin{eqnarray*}
\sum_{11}&:=&
\frac{4 \pi i  \sin ^2 \left(\frac{\pi a }{c} \right) }{k}
  \sum_{m\geq 0 \atop \epsilon \in \{\pm \}} 
  r_m   (-1)^{\nu_m^{\epsilon}}\ e^{\frac{3 \pi i h(\nu_m^{\epsilon})^2}{k}}
 \sum_{n=3m}^{\infty} \lambda_{n,m}^{\epsilon},
 \\
 \sum_2&:=&
 \frac{2  \sin ^2 \left(\frac{\pi a }{c} \right) }{k}
  \sum_{\nu \pmod k}(-1)^{\nu} \ e^{\frac{3 \pi i h \nu^2}{k}}\\&&
 \sum_{n \in \N} \int_{-\infty+\omega_n}^{\infty+\omega_n}
 H_{a,c} \left(\frac{\pi i h\nu}{k} -\frac{\pi x z}{k}\right)\cdot 
 e^{\frac{\pi i (2n+1)(x- \nu)}{k}-   \frac{3 \pi x ^2z}{k}} \
  dx.
\end{eqnarray*} 
If $c \nmid k$, then  a pole can only occur   if 
$
m \equiv \pm a k_1 \pmod{c_1}
$.
Writing 
$
 c_1 m \pm l
$
instead of $m$
with $m \geq 0$ for the choice $+$ and $m >0$ for the choice $-$ and $l$ as in the introduction,
we see that to each  choice there corresponds exactly one $\nu \pmod k$ and we can choose $\nu$ for $\epsilon \in \{ \pm 1\} $ as
\begin{eqnarray*}
\nu_m^{\epsilon} :=   -h' \left( m \, \epsilon\,  \frac{1}{c_1}\left(l-a k_1\right)\right).
\end{eqnarray*}      
As before, we denote the corresponding residues by $\lambda_{n,m}^{\epsilon}$. By shifting the path of integration through the points $\omega_n$, we have to take those points $x_m$ into account for which $\frac{2n+1}{6}> \frac{c_1 m \epsilon l}{c_1}$. One can  see that this is equivalent to   
\begin{eqnarray*}
n \geq 3m \epsilon s,
\end{eqnarray*}
where $s$ was defined in (\ref{k}).
By the Residue Theorem we obtain
\begin{eqnarray*}
(q;q)_{\infty}  \cdot N \left( \frac{a}{c};q\right)=
\sum_{12}+\sum_{2},
\end{eqnarray*}
where $\sum_2$  is given as before and  $\sum_{12}$  is defined as
\begin{eqnarray*}
\frac{4 \pi i  \sin ^2 \left(\frac{\pi a }{c} \right) }{k}
\left(
  \sum_{m\geq 0} 
 (-1)^{\nu_m^{+}} e^{\frac{3 \pi i h(\nu_m^{+})^2}{k}}
 \sum_{n=3m+s}^{\infty} \lambda_{n,m}^{+}
 + 
   \sum_{m\geq 1} 
 (-1)^{\nu_m^{-}} e^{\frac{3 \pi i h(\nu_m^{-})^2}{k}}
 \sum_{n=3m-s}^{\infty} \lambda_{n,m}^{-}
 \right).
 \end{eqnarray*}    
We first consider the sums $\sum_{11}$ and $\sum_{12}$. 
We have
\begin{eqnarray*}
\lambda_{n,m}^{\epsilon}=
- \frac{k \cdot 
\cosh \left(\frac{\pi i h\nu_m^{\epsilon}}{k} -\frac{\pi x_mz}{k}\right)\cdot  
e^{\frac{ \pi i (2n+1)(x_m- \nu_m^{\epsilon})}{k} - \frac{3 \pi z x_m^2}{k}}
 }{ \pi z \cdot
  \cosh \left(
\frac{\pi i h \nu_m^{\epsilon}}{k} -\frac{\pi x_m z}{k}
\epsilon \frac{\pi i a}{c}\right)  \cdot
\sinh \left(
\frac{\pi i h\nu_m^{\epsilon} }{k} -\frac{\pi x_m z}{k}
-\epsilon  \frac{\pi i a}{c}\right)
  }.
\end{eqnarray*}
From this one directly sees that
\begin{eqnarray*}
\lambda_{n+1,m}^{\epsilon}
=\exp \left( \frac{2 \pi i}{k}\left(x_m - \nu_{m}^{\epsilon} \right)\right) \cdot
\lambda_{n,m}^{\epsilon}.
\end{eqnarray*}
Thus 
\begin{eqnarray*}
 \sum_{n=3m+r}^{\infty} \lambda_{n,m}^{\epsilon}
 =\frac{\lambda_{3m+r,m}^{\epsilon}}{1-    \exp \left( \frac{2 \pi i}{k}\left(x_m - \nu_{m}^{\epsilon} \right)\right)}
 \qquad (r \in \{0,\pm s\}, \epsilon \in \{ \pm\} ).
\end{eqnarray*}
A straightforward but lenghty calculation gives 
\begin{eqnarray*}
\sum _{11} &=&
\frac{(-1)^{ak+1}i \sin \left(\frac{\pi a }{c}\right)}{\sin\left(\frac{\pi ah'}{c} \right) z} \cdot     e^{-\frac{3 \pi i a^2 k_1h'}{c}} \cdot
(q_1;q_1)_{\infty} \cdot N\left(\frac{a h'}{c};q_1 \right),\\
\sum _{12} &=& 
\frac{4i \sin \left(\frac{\pi a }{c}\right)(-1)^{ak+l+1}}{z}\
e^{-\frac{2 \pi i  h's a }{c} - \frac{3 \pi i h' a^2 k_1}{c c_1} + \frac{6 \pi i h' l a}{cc_1} }
q_1^{ \frac{sl}{c_1}-\frac{3}{2}\left(\frac{l}{c_1} \right)^2} 
(q_1;q_1)_{\infty} N\left(h'a,\frac{lc}{c_1},c;q_1 \right).
\end{eqnarray*}
Here we have to be a little careful since for $c|k$ ($c \nmid k$) we only have that $h \nu $ is congruent to $ m -\epsilon a k_1$  ($m \epsilon \frac{1}{c_1}(l-ak_1)$) modulo $k$ but not necessarily modulo $2k$ if $k$ is odd.

We next turn to the computation of $\sum_2$.
With the same argument as before we can change the sum over $\N$ into a sum over $\Z$. Making the translation 
$x \mapsto x+ \omega_n$ and writing $n=3 p + \delta$ with $ p \in \Z$ and $ \delta \in \{ 0, \pm 1\}$ gives that 
\begin{multline*}
 \sum_2 =
 \frac{ \sin ^2 \left(\frac{\pi a }{c} \right) }{k}
  \sum_{\nu \pmod k}
  (-1)^{\nu} \ e^{\frac{3 \pi i h \nu^2}{k}}
 \sum_{p \in \Z\atop \delta \in \{ 0, \pm 1\}}
 e^{- \frac{\pi(6p+2 \delta+1)^2}{12kz}- \frac{\pi i \nu (6p+ 2 \delta+1)}{k}}
 \\
  \int_{-\infty}^{\infty}
 H_{a,c} \left(\frac{\pi i h\nu}{k} -    \frac{\pi i( 6 p+ 2\delta+1 )}{6k}       -\frac{\pi zx}{k}\right)
  \cdot     e^{-\frac{3 \pi z x^2}{k}} \ dx.
\end{multline*}
Now  $(h',k)=1$  implies that $- h'(\nu+p)$ runs modulo $k$ if $\nu$ does. Thus we can change $\nu$ into  $- h'(\nu+p)$ which leads to 
\begin{multline} \label{change}
\sum_2=
 \frac{ \sin ^2 \left(\frac{\pi a }{c} \right) }{k}
  \sum_{\nu \pmod k \atop{ \delta \in \{0,\pm 1 \}\atop p \in\Z }}
  (-1)^{\nu+p} \
  q_1^{\frac{p}{2}(3p+2 \delta+1)} \cdot 
 e^{-\frac{\pi(2 \delta+1)^2}{12 kz} +\frac{\pi i h'\left(-3 \nu^2+(2 \delta+1)\nu \right)}{k}} 
  \\
  \int_{-\infty}^{\infty}
  H_{a,c} \left(\frac{\pi i \nu}{k} -    \frac{\pi i \left(2 \delta +1\right)  }{6k}     -\frac{\pi zx}{k}\right)
  \cdot     e^{-\frac{3 \pi z x^2}{k}} \ dx.
\end{multline}
Now the integral is independent of $p$  and the sum over $p$ equals
\begin{eqnarray}\label{sump}
\sum_{p \in \Z} 
(-1)^p \cdot q_1^{\frac{1}{2}p(3p+(2 \delta+1))} . 
\end{eqnarray}
If $\delta=1$, then (\ref{sump}) vanishes since  the $p-$th and the $-(p+1)$-th term  cancel. 
Changing $\nu$ into  $- \nu$, $p$ into $-p$ and $x$ into $-x$, we see that the terms in (\ref{change}) corresponding to $\delta=-1$ and $\delta=0$  are equal. Moreover  in this case (\ref{sump}) equals
$(q_1;q_1)_{\infty}$. Thus 
\begin{eqnarray*}\
\sum_2=
\frac{2 \sin^2\left( \frac{\pi a}{c}\right) \cdot (q_1;q_1)_{\infty} \cdot e^{-\frac{\pi}{12kz}}}{k} 
\sum_{\nu \pmod k} 
(-1)^{\nu} e^{-\frac{3 \pi i h'\nu^2}{k} +\frac{\pi i h'\nu}{k}} \cdot 
I_{a,c,k,\nu}(z).
\end{eqnarray*}
Now the theorem follows easily using  the transformation law
\begin{eqnarray*}
(q_1;q_1)_{\infty} 
=\omega_{h,k} \cdot z^{\frac{1}{2}}  \cdot
e^{\frac{\pi}{12k}\left(z^{-1}-z \right) } \cdot
(q;q)_{\infty}.
\end{eqnarray*}
\end{proof}
\section{Some estimates} \label{EST}
In this section we estimate the function $I_{a,c,k,\nu}(z)$, defined in Section \ref{TRANS}, and some Kloosterman sums.
\begin{lemma} \label{intest}
Assume that $n \in \N,\, \nu \in \Z$, $z:=\frac{k}{n}-k \Phi i$, $-\frac{1}{k(k+k_1)}\leq \Phi \leq \frac{1}{k(k+k_2)}$, where $\frac{h_1}{k_1}  < \frac{h}{k}  <\frac{h_2}{k_2} $ are adjacent Farey fractions in the Farey sequence of order $N$, with $N:=\lfloor n^{\frac{1}{2}}\rfloor$. Then
\begin{eqnarray*}
z^{\frac{1}{2}} \cdot  I_{a,c,k,\nu}(z)  \ll 
k\cdot n^{\frac{1}{4}} \cdot 
g_{a,c,k,\nu},
\end{eqnarray*}
where $ g_{a,c,k,\nu},:= 
\left(\min
 \left(6 kc \left\{ \frac{\nu}{k} -\frac{1}{6k}+\frac{a}{c}\right\} ,  6 kc \left\{ \frac{\nu}{k} -\frac{1}{6k}-\frac{a}{c}\right\}     \right)
 \right)^{-1}$, where $\{x \}:=x-\lfloor x \rfloor$ for $x \in \R$. Here the implied constant is independent of $a, k$, and $\nu$.
\end{lemma}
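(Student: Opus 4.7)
The plan is to first extract the factor $|z^{1/2}| \ll n^{-1/4}$ and reduce to estimating $I_{a,c,k,\nu}(z)$ alone. The Farey-neighbor hypothesis forces $k+k_j \geq N+1 \geq \sqrt n$ for $j \in \{1,2\}$, so $|k\Phi| \leq 1/\sqrt n$; combined with $k\le\sqrt n$ this gives $|z|\ll n^{-1/2}$. It therefore suffices to establish
\begin{equation*}
|I_{a,c,k,\nu}(z)| \ll k\sqrt{n} \, g_{a,c,k,\nu}
\end{equation*}
with the implied constant depending on $c$ but not on $a$, $k$, or $\nu$.

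Next I would bound the integrand pointwise. Setting $w(x):=\pi i\nu/k-\pi i/(6k)-\pi zx/k$, so that $\re w(x)=-\pi x/n$ and $\im w(x)=\pi A(x)$ with $A(x):=\nu/k-1/(6k)+\Phi x$, the identities $|\cosh(u+iv)|\leq \cosh u$ and $|\sinh(u+iv)|\geq|\sin v|$ give
\begin{equation*}
|H_{a,c}(w(x))|\ \leq\ \frac{\cosh(\pi x/n)}{|\sin\pi(A(x)+a/c)|\cdot|\sin\pi(A(x)-a/c)|},
\end{equation*}
while $|e^{-3\pi zx^2/k}|=e^{-3\pi x^2/n}$ provides Gaussian damping. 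Applying $|\sin(\pi t)|\geq 2\|t\|_{\R/\Z}$ together with $|\Phi x|\leq |x|/(k\sqrt n)$ and the triangle inequality, one can replace $A(x)$ by $A(0)$ in the sine arguments at negligible cost on the effective window $|x|\lesssim\sqrt n$; in the sparse near-pole regions, the full lower bound $|\sinh(u+iv)|^2=\sinh^2 u+\sin^2 v$ allows the $\sinh$ term to take over and absorbs the singular contribution.

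The key arithmetic input is the following: with $A_\pm:=\{A(0)\pm a/c\}\in[0,1)$, one has $A_+-A_-\equiv 2a/c\pmod 1$, and since $c$ is odd with $1\le a<c$ one has $\|2a/c\|_{\R/\Z}\ge 1/c$, which forces $\max(A_+,A_-)\ge 1/c$. Consequently
\begin{equation*}
\frac{1}{A_+ A_-} \ \leq\ \frac{c}{\min(A_+,A_-)} \ =\ 6kc^2\, g_{a,c,k,\nu},
\end{equation*}
and multiplying by $\int_\R e^{-3\pi x^2/n}\,dx=\sqrt{n/3}$ yields the claimed bound. The main delicacy will be the principal-value prescription in the case $c\mid k$, where some $A_\pm$ vanish on the real line: this requires a small-semicircle deformation of the $x$-contour near each singularity, and one must track both the pole residue and the truncated integral to show they combine to give a contribution of the same order as the bulk. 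Controlling the interaction between the shift $\Phi x$ and the sine denominators in the regime where $\min(A_+,A_-)$ is as small as $1/(6kc)$ is the other technical point, resolved by the refined $\sinh^2+\sin^2$ bound described above.
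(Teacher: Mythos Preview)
Your approach differs from the paper's in one essential respect: you work directly with the real-$x$ integral and bound the integrand pointwise, whereas the paper first substitutes $\tau=\pi zx/k$ (rotating the contour onto the ray $\arg\tau=\arg z$) and then shifts to the real $\tau$-axis, justified by checking decay at infinity and that all poles of the rotated integrand lie on the imaginary $\tau$-axis. After this rotation the argument of $H_{a,c}$ becomes $\frac{\pi i\nu}{k}-\frac{\pi i}{6k}-\tau$ with $\tau$ real, so its imaginary part is the \emph{constant} $\pi(\nu/k-1/(6k))$; the lower bound $|\sinh(u+iv)|\ge|\sin v|$ is then uniform in the integration variable, the Gaussian becomes $e^{-3k\re(1/z)\tau^2/\pi}$, and the proof closes with the inequality $\re(1/z)^{-1/2}|z|^{-1/2}\le 2^{1/4}n^{1/4}k^{-1/2}$. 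No ``moving singularity'' ever arises.

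Your direct route is plausible, but the delicate point you flag is not fully dispatched. On $|x|\lesssim\sqrt n$ the drift satisfies $|\Phi x|\lesssim 1/k$, which can exceed $\min(A_+,A_-)$ by a factor of order $c$ when the latter is near its minimum $1/(6kc)$; so ``replace $A(x)$ by $A(0)$ at negligible cost'' fails precisely in the regime that determines the size of $g_{a,c,k,\nu}$. The rescue via $|\sinh(u+iv)|^2=\sinh^2 u+\sin^2 v$ can be made to work---the would-be zeros of $\sin v_\pm$ on the real $x$-line sit at $|x_0|\gtrsim\sqrt n/c$, where $|\sinh(\pi x_0/n)|\gtrsim 1/(c\sqrt n)$ supplies a lower bound, and a short calculation shows the near-pole contribution is $O_c(k\sqrt n)$---but this requires an explicit bulk/near-pole case split and a quantitative integration across each near-pole window, not merely an appeal to the refined identity. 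The paper's contour rotation sidesteps this entirely, which is why it is the cleaner argument; if you want to keep the direct approach, you should supply that case analysis in full.
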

\begin{proof} 
We write $\frac{\pi z}{k}=Ce^{iA}$ with $C>0$. Then $|A|<\frac{\pi}{2}$ since $\re(z)>0$. 
Making the substitution $\tau=\frac{\pi z x}{k}$ 
gives 
\begin{eqnarray}  \label{integralest}
z^{\frac{1}{2}} \cdot  I_{a,c,k,\nu}(z)= 
\frac{k}{\pi z^{\frac{1}{2}}}
\int_{S} e^{-\frac{3 k \tau^2}{\pi z}}\cdot H_{a,c}\left(\frac{\pi i \nu}{k} -\frac{\pi i }{6k}-\tau \right)
d\tau,
\end{eqnarray}
where  $\tau$ runs on the ray through $0$ of elements with argument $\pm A$.
One can see that  for  $0 \leq t \leq A$
\begin{eqnarray} \label{goestozero}
\left| e^{-\frac{3 k R^2 e^{2 i t}}{\pi z}}\cdot H_{a,c}\left(\frac{\pi i \nu}{k} -\frac{\pi i }{6k}\pm Re^{it} \right)
\right| \to 0  \qquad (R \to \infty).
\end{eqnarray}
Moreover, since $c$ is odd,  the integrant in (\ref{integralest})   can only have poles in points $ir$ with $r \in \R \setminus\{0 \}$. 
Thus we can  shift   the path  of integration to the real line and get
\begin{eqnarray*}
z^{\frac{1}{2}}\cdot  I_{a,c,k,\nu}(z)= 
\frac{k}{\pi z^{\frac{1}{2}}}
\int_{\R} e^{-\frac{3 k t^2}{\pi z}}\cdot H_{a,c}\left(\frac{\pi i \nu}{k} -\frac{\pi i }{6k}- t \right)
dt.
\end{eqnarray*}
We can show the following estimates 
\begin{eqnarray*}
\left|  e^{-\frac{3 k t^2}{\pi z}}\right| = e^{-\frac{3 k}{\pi} \re \left(\frac{1}{z} \right)t^2},\\
\left| \cosh \left(\frac{\pi i \nu}{k} -\frac{\pi i }{6k}- t \right)  \right|\leq e^t,
\end{eqnarray*}
\begin{displaymath}
\left| \sinh \left(\frac{\pi i \nu}{k} -\frac{\pi i }{6k}- t \pm \frac{\pi i a }{c} \right)  \right|
\geq 
\left\{
\begin{array}{ll}
\frac{e^t}{2 \sqrt{2}}&\text{if } t \geq 1,\\[1ex]
\left| \sin \left(\frac{\pi \nu}{k} -\frac{\pi  }{6k} \pm \frac{\pi  a }{c} \right)  \right|&\text{if } t \leq 1,
\end{array}
\right.
\end{displaymath}
\begin{multline*}
 \left| \sin \left(\frac{\pi \nu}{k} -\frac{\pi  }{6k} + \frac{\pi  a }{c} \right)  \right|
\left| \sin \left(\frac{\pi \nu}{k} -\frac{\pi  }{6k} - \frac{\pi  a }{c} \right)  \right| \\
\gg \left(\min
 \left( \left\{ \frac{\nu}{k} -\frac{1}{6k}+\frac{a}{c}\right\} ,    \left\{ \frac{\nu}{k} -\frac{1}{6k}-\frac{a}{c}\right\}     \right)
 \right).
\end{multline*}
Thus 
\begin{eqnarray*}
z^{\frac{1}{2}} \cdot  I_{a,c,k,\nu}(z)
\ll \frac{k}{ \left(\min
 \left( \left\{ \frac{\nu}{k} -\frac{1}{6k}+\frac{a}{c}\right\} ,    \left\{ \frac{\nu}{k} -\frac{1}{6k}-\frac{a}{c}\right\}     \right)
 \right)   |z|^{\frac{1}{2}}  }    \int_{\R} e^{-\frac{3k}{\pi}  t^2 \re\left(\frac{1}{z} \right) } dt.
\end{eqnarray*}
Making the substitution $t \mapsto \sqrt{\frac{3 k \re\left( \frac{1}{z}\right)}{\pi} } \cdot  t$ and using the estimate 
\begin{eqnarray*}
\re\left( \frac{1}{z}\right)^{-\frac{1}{2}} \cdot |z|^{-\frac{1}{2}} \leq 2^{\frac{1}{4}}  \cdot n^{\frac{1}{4}} \cdot k^{-\frac{1}{2}}
\end{eqnarray*}
gives the lemma.
\end{proof}
We next  estimate  certain  sums of Kloosterman type.
\begin{lemma} \label{kloost}
Let $n,m \in \Z$, $0 \leq \sigma_1 < \sigma_2 \leq k$, $D \in \Z$ with $(D,k)=1$. 
\begin{enumerate}
\item
We have
\begin{eqnarray} \label{kloost1}
\sum_{h\pmod k^* \atop \sigma_1 \leq Dh' \leq \sigma_2} 
\omega_{h,k} \cdot  e^{\frac{2 \pi i }{k} (hn+h'm )} 
&\ll& \gcd(24n+1,k)^{\frac{1}{2}}  \cdot 
k^{\frac{1}{2}+\epsilon}.
\end{eqnarray}
\item
If $c|k$, then we have 
\begin{multline} 
\label{kloost2}
(-1)^{ak+1} \sin\left(\frac{\pi a}{c} \right)
\sum_{h\pmod k^* \atop \sigma_1 \leq D h' \leq \sigma_2} 
\frac{\omega_{h,k}}{\sin\left(\frac{\pi a h'}{c} \right)} \cdot  e^{-\frac{3 \pi i a^2 k_1h'}{c}} \cdot e^{\frac{2 \pi i }{k} (hn+h'm )}  
\ll \gcd(24n+1,k)^{\frac{1}{2}}  \cdot
k^{\frac{1}{2}+\epsilon},
\end{multline}
\end{enumerate}
where the implied constants are independent of $a$ and $k$.
\end{lemma}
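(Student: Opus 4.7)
The plan is to reduce each of the sums in (\ref{kloost1}) and (\ref{kloost2}) to classical Kloosterman sums
\begin{eqnarray*}
K(n',m';k) := \sum_{h \pmod k^*} e^{2\pi i(n'h+m'h')/k},
\end{eqnarray*}
for which Weil's bound yields $|K(n',m';k)| \ll \gcd(n',m',k)^{1/2}\,k^{1/2+\epsilon}$, and then to keep careful track of the arithmetic of the arguments so that the factor $\gcd(24n+1,k)^{1/2}$ emerges.

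For part (i), the first step is to rewrite the multiplier $\omega_{h,k}=e^{\pi i t(h,k)}$ in a form amenable to summation over $h \pmod k^*$. Using the classical Selberg/Whiteman-type identity for the Dedekind sum (as in Andrews' thesis \cite{An2} and in \cite{BO1}), one expresses $\omega_{h,k}$ as a normalized exponential involving an inverse of $h$ modulo a bounded multiple of $k$. After absorbing these small factors into the modulus and into a bounded number of arithmetic progressions for $h$, the sum becomes a fixed linear combination of complete Kloosterman sums $K(n'',m'';k)$ in which $24n''+1$ is congruent to $(24n+1)$ times a unit modulo $k$. Applying Weil's bound then yields the desired estimate, provided the range restriction $\sigma_1 \le Dh' \le \sigma_2$ has been removed first. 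I would remove that restriction by writing the characteristic function of $[\sigma_1,\sigma_2]$ as a finite Fourier series in $h' \pmod k$; each Fourier coefficient produces a complete Kloosterman sum of the type above, and the logarithmic loss coming from summing the Fourier coefficients is absorbed into the factor $k^\epsilon$.

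For part (ii), the new features are the factor $\sin(\pi a/c)/\sin(\pi a h'/c)$ and the phase $e^{-3\pi i a^2 k_1 h'/c}$. Since $c \mid k$, both depend only on $h' \pmod{2c}$. My plan is to expand $1/\sin(\pi a h'/c)$ as a finite Fourier series in $h' \pmod{2c}$; this introduces $O_c(1)$ terms of the form $e^{\pi i j h'/c}$, each of which, when combined with $e^{-3\pi i a^2 k_1 h'/c}$ and the original $e^{2\pi i m h'/k}$, can be rewritten as $e^{2\pi i m^\ast h'/k}$ with $m^\ast \in \Z$ differing from $m$ by a quantity depending only on $a,c,k,j$. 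Each resulting inner sum then has exactly the structure of part (i), so applying (i) uniformly and summing the $O_c(1)$ contributions yields the claim, with implied constant independent of $a$ and $k$.

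The main obstacle I foresee is the arithmetic bookkeeping in the reduction to Kloosterman sums: after rewriting $\omega_{h,k}$ and absorbing all the extra phases, one must verify that the divisibility $\gcd(24n+1,k) \mid \gcd(n'',m'',k)$ (up to a bounded divisor) is preserved, so that Weil's bound actually delivers the advertised $\gcd(24n+1,k)^{1/2}$ saving rather than only $k^{1/2+\epsilon}$. This is the same mechanism used for $p(n)$ in Rademacher's work, but the combination of the incomplete range on $h'$ with the trigonometric denominator forces these identifications to be carried out while simultaneously keeping the Fourier expansion completely explicit.
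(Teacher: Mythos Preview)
Your proposal is correct and follows essentially the same route as the paper. For part (i) the paper simply cites Andrews \cite{An2}, whose argument is precisely the reduction to Weil-bounded Kloosterman sums (via an explicit formula for $\omega_{h,k}$) together with Fourier completion to remove the restriction $\sigma_1\le Dh'\le\sigma_2$, just as you outline. For part (ii) the paper observes that the combined factor $e^{-3\pi i a^2 k_1 h'/c}/\sin(\pi a h'/c)$ depends only on $h'\pmod{\tilde c}$ (with $\tilde c\in\{c,2c\}$), splits the sum according to these residue classes, and then detects each class by additive characters modulo $\tilde c$; this is literally the same computation as your direct Fourier expansion of the periodic factor, and in both versions one is left with $O_c(1)$ sums of the type in part (i) with $m$ shifted by an integer.
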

\begin{proof}
Equation  (\ref{kloost1}) is basically  proven in \cite{An2}, thus we only consider (\ref{kloost2}).
We set $\tilde c:=c$ if $k$ is odd and $\tilde c:=2c$ if $k$ is even.
Clearly
$
\frac{e^{- \frac{3 \pi i a^2k_1 h'}{c}}}{ \sin\left(\frac{\pi ah'}{c} \right)}
$
only depends on the residue class of $h' \pmod{\tilde c}$.
Thus we can write (\ref{kloost2}) as 
\begin{eqnarray*}
(-1)^{ak+1} \sin\left(\frac{\pi a}{c} \right)
\sum_{c_j}  \frac{e^{- \frac{3 \pi i a^2k_1 c_j}{c}}}{ \sin\left(\frac{\pi ac_j}{c} \right)}
\sum_{h\pmod k^*\atop {\sigma_1 \leq D h' \leq \sigma_2 \atop h' \equiv c_j \pmod{\tilde c}}} 
\omega_{h,k} \cdot
e^{\frac{2 \pi i }{k} (hn+h'm )},  
\end{eqnarray*}
where $c_j$ runs through a set of primitive residues $\pmod{\tilde c}$.
The inner sum can be rewritten as 
\begin{multline*}
\frac{1}{\tilde{c}}
\sum_{h\pmod k^* \atop \sigma_1 \leq D h' \leq \sigma_2} 
\omega_{h,k}  \cdot 
e^{\frac{2 \pi i  }{k} (hn+h'm )} 
\sum_{r \pmod{\tilde{c}}}
e^{\frac{2 \pi i r}{\tilde c}(h'-c_j)}  \\
=
\frac{1}{\tilde{c}}
\sum_{r \pmod{\tilde{c}}}
e^{-\frac{2 \pi i r c_j}{\tilde c}}
\sum_{h\pmod k^* \atop \sigma_1 \leq D h' \leq \sigma_2} 
\omega_{h,k}  \cdot 
e^{\frac{2 \pi i }{k} \left(hn+h'\left(m +\frac{kr}{\tilde c}\right)\right)} .
\end{multline*}
Using (\ref{kloost1}) now easily gives (\ref{kloost2}).
\end{proof}
\begin{remark}
If $a=1$ and $c=3$, then it is not hard to see that the left hand side of (\ref{kloost2}) simplifies to 
\begin{eqnarray*}
\sum_{h\pmod k^* \atop \sigma_1 \leq D h' \leq \sigma_2} 
\leg{h}{3} \cdot  \omega_{h,k} \cdot e^{\frac{2 \pi i }{k} (hn+h'm )}.  
\end{eqnarray*}
\end{remark}
\section{Proof of Theorem \ref{main1}} \label{ProofTH1}
\begin{proof}[Proof of Theorem \ref{main1}] 
For the proof of Theorem \ref{main1} we use the Hardy-Ramanujan method.
By Cauchy's Theorem we have  for $n>0$
\begin{eqnarray*}
A \left(\frac{a}{c};n \right)
=\frac{1}{ 2 \pi i }  
\int_{C} \frac{N \left( \frac{a}{c};q \right)}{q^{n+1}} \ dq,
\end{eqnarray*}
where $C$ is an arbitrary path inside the unit  circle surrounding $0$ counterclockwise.
Choosing the circle with radius $e^{-\frac{2\pi}{n}}$ and as a parametrisation $q= e^{-\frac{2 \pi}{n} + 2 \pi i t }$ with $0 \leq t \leq 1$,  gives
\begin{eqnarray*}
A \left(\frac{a}{c};n \right)
= \int_{0}^{1} 
N \left( \frac{a}{c}; e^{-\frac{2 \pi}{n} + 2 \pi i t}\right) \cdot e^{2 \pi- 2 \pi i n t}  \ dt. 
\end{eqnarray*} 
Define \begin{eqnarray*}
\vartheta_{h,k}' :=\frac{1}{k(k_1+k)},\quad 
\vartheta_{h,k}'' :=\frac{1}{k(k_2+k)}, 
\end{eqnarray*}
where $\frac{h_1}{k_1}  < \frac{h}{k}  <\frac{h_2}{k_2} $ are adjacent Farey fractions in the Farey sequence of order $N:=\left\lfloor n^{1/2} \right \rfloor$.  From the theory of Farey fractions it is known that 
\begin{eqnarray} \label{farey}
\frac{1}{k+k_j} \leq \frac{1}{N+1} \qquad (j =1,2).
\end{eqnarray}
We decompose the path of integration in paths along the Farey arcs 
$-\vartheta_{h,k}' \leq \Phi \leq \vartheta_{h,k}''$, where $\Phi=t-\frac{h}{k}$ and  $0 \leq h \leq k\leq N$ with $(h,k)=1$.
Thus  
\begin{eqnarray*}
A \left(\frac{a}{c};n \right) = 
\sum_{h,k} 
e^{- \frac{2 \pi i hn}{k}}
\int_{-\vartheta_{h,k}'}^{\vartheta_{h,k}''}
N\left(\frac{a}{c}; e^{\frac{2 \pi i }{k}(h+iz) } \right)  \cdot 
e^{\frac{2 \pi n z}{k}} \ d\Phi,
\end{eqnarray*}
where $z=\frac{k}{n}- k \Phi i$.
Applying Theorem \ref{transfo} 
gives
\begin{multline*} 
A \left(\frac{a}{c};n \right) 
=
  i \sin \left(\frac{\pi a}{c} \right)   \sum_{h,k\atop c|k} 
  \omega_{h,k} \ \frac{(-1)^{ak+1}}{
\sin \left(\frac{\pi a h'}{c} \right)} \cdot e^{- \frac{3 \pi i a^2 k_1h'}{c} - \frac{2 \pi i hn}{k} } 
\int_{- \vartheta_{h,k}'}^{\vartheta_{h,k}^{''}}
z^{-\frac{1}{2}} \cdot 
e^{\frac{2 \pi z}{k}\left(n-\frac{1}{24}  \right)+\frac{\pi}{12k z }}  \\
 \times
  N \left(\frac{ah'}{c} ;q_1\right)  
d \Phi 
-4    i \sin \left(\frac{\pi a}{c} \right)   \sum_{h,k\atop c\nmid k} 
  \omega_{h,k} \ (-1)^{ak+l} \
e^{-\frac{2 \pi i h' sa}{c} -\frac{3 \pi i h'a^2 k_1}{cc_1}  +\frac{6 \pi i h' l a}{cc_1}  - \frac{2 \pi i hn}{k} }  
\\
\int_{- \vartheta_{h,k}'}^{\vartheta_{h,k}^{''}}
z^{-\frac{1}{2}} \cdot
e^{\frac{2 \pi z}{k}\left(n-\frac{1}{24}  \right)+\frac{\pi}{12k z }}
 \cdot 
  q_1^{\frac{sl}{c_1} -\frac{3 l^2}{2 c_1^2} }  \cdot 
  N \left(ah',\frac{lc}{c_1},c ;q_1\right)   
d  \Phi
+  2 \sin^2 \left(\frac{\pi a}{c} \right)   
\sum_{h,k} 
\frac{\omega_{h,k}}{k} \cdot e^{-\frac{2 \pi i hn}{k}} \\
\sum_{ \nu \pmod k}  (-1)^{\nu} \ e^{-\frac{3 \pi i h' \nu^2}{k}+\frac{\pi i h'\nu}{k}} 
\int_{- \vartheta_{h,k}'}^{\vartheta_{h,k}^{''}}
e^{  \frac{ 2 \pi z}{k}\left(n-\frac{1}{24} \right)} 
\cdot z^{\frac{1}{2}}  \cdot 
 I_{a,c,k,\nu}(z) d \Phi 
 =: \sum_{1}+\sum_2 +\sum_3.
\end{multline*}   
To estimate  $\sum_1$, we  write 
\begin{eqnarray*} 
N\left(\frac{ah'}{c} ;q_1\right)=:
1+ 
\sum_{r \in \N} a(r) \cdot e^{\frac{2 \pi i m_r h'}{k}} \cdot e^{- \frac{2 \pi r}{kz}},
\end{eqnarray*} 
where $m_r$ is a  sequence in $\Z$  and the coefficients $a(r)$ are independent of  $a,c,k$, and $h$.
We treat the constant term and the term coming from from $r \geq 1$ seperately since they contribute to the main term and to the  error term, respectively.  We denote the associated sums by $S_1$ and $S_2$, respectively and first estimate $S_2$. Throughout we need the easily verified fact  that  $\re(z)=\frac{k}{n}$, $\re\left( \frac{1}{z}\right)> \frac{k}{2}$,
$|z|^{-\frac{1}{2}} \leq n^{\frac{1}{2}} \cdot k^{-\frac{1}{2}} $, and $\vartheta_{h,k}'+\vartheta_{h,k}^{''} \leq
\frac{2}{k(N+1)}$. 
Since $k_1, k_2 \leq N$, we can  write 
\begin{eqnarray} \label{pathint}
\int_{- \vartheta_{h,k}'}^{\vartheta_{h,k}^{''}}
=
\int_{-\frac{1}{k(N+k)}}^{\frac{1}{k(N+k)}}  
+ \int_{-\frac{1}{k(k_1+k)}}^{-\frac{1}{k(N+k)}} 
+ \int_{\frac{1}{k(N+k)}}^{\frac{1}{k(k_2+k)}} 
\end{eqnarray}
and denote the associated sums by $S_{21}$, $S_{22}$, and $S_{23}$, respectively.

We first consider $S_{21}$. Using Lemma \ref{kloost} gives 
\begin{multline*}
S_{21} \ll
\left| \sum_{r=1}^{\infty} 
 a(r)
 \sum_{c|k} 
 (-1)^{ak+1}\ \sin\left(\frac{\pi a}{c} \right)\sum_{h} \frac{\omega_{h,k}}{\sin\left(\frac{\pi a h'}{c} \right)}
 \cdot 
e^{- \frac{3 \pi i a^2 k_1h'}{c} - \frac{2 \pi i hn}{k} + \frac{2 \pi i m_r h' }{k}} \right.  \\ \left.
\int_{- \frac{1}{k(N+k)}}^{   \frac{1}{k(N+k)}}
z^{-\frac{1}{2}} \cdot 
e^{-\frac{2 \pi}{kz}\left(r-\frac{1}{24}\right)+\frac{2 \pi z}{k}\left(n-\frac{1}{24} \right)}  \ d \Phi  \right|
\ll 
\sum_{r=1}^{\infty} |a(r)| \cdot e^{-\pi r} 
\sum_{k} k^{-1+\epsilon} \cdot (24n-1,k)^{\frac{1}{2}} \\
\ll  \sum_{d|(24n-1)\atop d \leq N} d^{\frac{1}{2}}  
\sum_{k \leq \frac{N}{d}}  (dk)^{-1 + \epsilon} 
\ll n^{\epsilon}  \sum_{d|(24n-1)\atop d \leq N} d^{-\frac{1}{2}}  
\ll n^{\epsilon}.
\end{multline*}
Since $S_{22}$ and $S_{23}$ are treated in exactly the same way we only consider $S_{22}$. 
Writing 
\begin{eqnarray*}
\int_{-\frac{1}{k(k+k_1)}}^{-\frac{1}{k(N+k)}}
= \sum_{l=k_1+k}^{N+k-1} \int_{-\frac{1}{kl}}^{-\frac{1}{k(l+1)}},
\end{eqnarray*}
we see 
\begin{multline} \label{est4}
S_{22} \ll  \left|   \sum_{r=1 }^{\infty} a(r)
  \sum_{c|k} 
\sum_{l=N+1}^{N+k-1} \int_{-\frac{1}{kl}}^{-\frac{1}{k(l+1)}} 
z^{-\frac{1}{2}} \cdot 
e^{-\frac{2 \pi}{kz}\left(r-\frac{1}{24}\right)+\frac{2 \pi z}{k}\left(n-\frac{1}{24} \right)} 
d\Phi \right.  \\ \left.
(-1)^{ak+1} \sin \left(\frac{\pi a}{c} \right)
\sum_{h\atop N < k+k_1 \leq l} \frac{\omega_{h,k}}{\sin\left(\frac{\pi a h'}{c} \right)} \cdot 
e^{- \frac{3 \pi i a^2 k_1h'}{c}} \cdot e^{- \frac{2 \pi i hn}{k} } e^{\frac{2 \pi i m_r h'}{k}} \right|. 
\end{multline}
It follows from the theory of Farey fractions that
\begin{eqnarray*}
k_1 \equiv - h' \pmod k, \quad k_2 \equiv h' \pmod k,  
\\     
N-k<k_1 \leq N, \quad N-k < k_2 \leq N.
\end{eqnarray*}
Thus we can use Lemma \ref{kloost} and estimate (\ref{est4}) as in the case of $S_{21}$. Therefore  $\sum_1$ equals 
\begin{eqnarray*}
 i \sin \left(\frac{\pi a}{c} \right)   \sum_{h,k\atop c|k} 
  \omega_{h,k} \cdot \frac{(-1)^{ak+1}}{
\sin \left(\frac{\pi a h'}{c} \right)} \cdot  e^{- \frac{3 \pi i a^2 k_1h'}{c} - \frac{2 \pi i hn}{k} } 
\int_{- \vartheta_{h,k}'}^{\vartheta_{h,k}^{''}}
z^{-\frac{1}{2}}  \cdot 
e^{\frac{2 \pi z}{k}\left(n-\frac{1}{24}  \right)+\frac{\pi}{12k z }}   \
d\Phi
+ O\left( n^{\epsilon}\right).
\end{eqnarray*}
The sum $\sum_2$ is treated in a similar manner, we make some comments about necessary modifications. One can show that one can write 
\begin{multline} \label{fourex}
e^{-\frac{2 \pi i h' sa}{c} -\frac{3 \pi i h'a^2 k_1}{cc_1}  +\frac{6 \pi i h' l a}{cc_1}   + \frac{\pi}{12k z }}
 \cdot 
  q_1^{\frac{sl}{c_1} -\frac{3 l^2}{2 c_1^2} }  \cdot 
  N \left(ah',\frac{lc}{c_1},c ;q_1\right)   
  =:
  \sum_{r \geq r_0 } b(r) e^{\frac{2 \pi i m_rh'}{k}} e^{-\frac{\pi i r}{12kc^2z}},
  \end{multline}
   where $m_r$ is a sequence in $\Z$, $r_0 \in \Z$. A lengthy but straightforward calculation shows that the terms with $r$ negative, i.e., the the part that gives a contribution for the main term,  can only arise if $s\in \{0,3\}$. If $s=0$, then the contribution is given by
  \begin{eqnarray*}
 \frac{i}{2}  e^{ -\frac{3 \pi i h'a^2 k_1}{cc_1}  +\frac{6 \pi i h' l a}{cc_1}  -\frac{\pi i a h'}{c}+ \frac{\pi}{12k z }}
 \cdot 
  q_1^{-\frac{3 l^2}{2 c_1^2} + \frac{l}{2c_1} }
  \sum_{r \atop \delta_{c,k,r}>0} 
e\left(-\frac{ah'r}{c}\right)  \cdot
q_1^{\frac{lr}{c_1}}.
  \end{eqnarray*}
 If $s=3$, then the contribution is given by
  \begin{eqnarray*}
 \frac{i}{2}  e^{-\frac{6 \pi i a h'a}{c}  -\frac{3 \pi i h'a^2 k_1}{cc_1}  +\frac{6 \pi i h' l a}{cc_1}  + 
 \frac{\pi i a h'}{c}+ \frac{\pi}{12k z }}
 \cdot 
  q_1^{\frac{5l}{2c_1}-\frac{3 l^2}{2 c_1^2} -1} 
  \sum_{r \atop \delta_{c,k,r}>0} 
e\left(\frac{ah'r}{c}\right) \cdot
q_1^{\left( 1-\frac{l}{c_1}\right)r}.
  \end{eqnarray*}
   One can write the sum over $k$ in groups in which the $k$'s have the same values for $c_1$ and $l$ and thus the same values for $\delta_{c,k,r}$, where  $\delta_{c,k,r}$ was defined in (\ref{del}). In each class the condition $\delta_{c,k,r}>0$ is now independent of the $k$ in this class; in every group there are only finitely many terms with $\delta_{c,k,r}>0$, and the number is bounded in terms of $c$. 
   Moreover the coefficient $b(r)$ is independent of $k$ and $a$ in a fixed class.
   Now the terms with negative exponents can be estimated  as before, thus $\sum_2$ equals
  \begin{displaymath}
  2     \sin \left(\frac{\pi a}{c} \right)   \sum_{k,r\atop {c\nmid k  \atop \delta_{c,k,r}>0} } 
  (-1)^{ak+l}  \sum_{h} \omega_{h,k}
  e^{\frac{2 \pi i }{k}(-nh+ m_{a,c,k,r} h')} 
  \int_{- \vartheta_{h,k}'}^{\vartheta_{h,k}^{''}}
z^{-\frac{1}{2}} 
e^{\frac{2 \pi z}{k}\left(n-\frac{1}{24}  \right)+\frac{2 \pi}{k z }  \delta_{c,k,r}} 
d\Phi + O\left(n^{\epsilon} \right).
  \end{displaymath}
  We turn back to the estimation of $\sum_1$. 
  We can write 
  \begin{eqnarray*}
\int_{- \vartheta_{h,k}'}^{\vartheta_{h,k}^{''}}
=
\int_{-\frac{1}{kN}}^{\frac{1}{kN}}  -
 \int_{-\frac{1}{kN}}^{-\frac{1}{k(k+k_1)}} 
- \int_{\frac{1}{k(k+k_2)}}^{\frac{1}{kN}}
\end{eqnarray*}
and denote the associated sums by $S_{11}$, $S_{12}$, and $S_{13}$, respectively.
The sums  $S_{12}$ and $S_{13}$ contribute to the error terms. Since they  have the same shape, we only consider $S_{12}$.
Writing
\begin{eqnarray*}
\int_{-\frac{1}{kN}}^{-\frac{1}{k(k+k_1)}}
= \sum_{l=N}^{k+k_1-1} 
\int_{-\frac{1}{kl}}^{-\frac{1}{k(l+1)}}
\end{eqnarray*}
gives
\begin{multline*}
S_{12} \ll   \sum_{c| k} 
\sum_{l=N}^{N+k-1} \int_{-\frac{1}{kl}}^{-\frac{1}{k(l+1)}} 
z^{-\frac{1}{2}}
e^{\frac{ \pi}{12kz}+\frac{2 \pi z}{k}\left(n-\frac{1}{24} \right)} 
d\Phi  \\
(-1)^{ak+1} \sin \left(\frac{\pi a}{c} \right)
\sum_{h\atop l < k+k_1-1 \leq N+k-1} \frac{\omega_{h,k}}{\sin\left(\frac{\pi a h'}{c} \right)} \cdot 
e^{- \frac{3 \pi i a^2 k_1h'}{c}}  \cdot e^{- \frac{2 \pi i hn}{k} }. 
\end{multline*}
Using $\re(z)=\frac{k}{n}$, $\re\left( \frac{1}{z}\right)<k$, and $|z|^2 \geq \frac{k^2}{n^2}$, this sum can be estimated as before against $O\left(n^{\epsilon} \right)$. Thus, using  (\ref{kl1}),
\begin{eqnarray*}
\sum_1=
 i  \sum_{c|k} 
B_{a,c,k}(-n,0)
\int_{- \frac{1}{kN}}^{\frac{1}{kN}}
z^{-\frac{1}{2}} \cdot 
e^{\frac{2 \pi z}{k}\left(n-\frac{1}{24}  \right)+\frac{\pi}{12k z }}  \
d \Phi
+ O\left( n^{\epsilon}\right).
\end{eqnarray*}
In the same way, we obtain, using  (\ref{kl2}), 
  \begin{eqnarray*}
  \sum_2=
  2    \sin \left(\frac{\pi a}{c} \right)   \sum_{k,r\atop {c\nmid k  \atop \delta_{c,k,r}>0} } 
  D_{a,c,k}(-n,m_{a,c,k,r}) 
  \int_{- \frac{1}{kN}}^{\frac{1}{kN}}
z^{-\frac{1}{2}} \cdot
e^{\frac{2 \pi z}{k}\left(n-\frac{1}{24}  \right)+\frac{2 \pi}{k z }  \delta_{c,k,r}} 
\ d\Phi + O \left(n^{\epsilon} \right).
  \end{eqnarray*}
To finish the estimation of $\sum_1$ and $\sum_{2}$ we have to consider integrals of the form
 \begin{eqnarray*}
I_{k,r}:= \int_{-\frac{1}{kN}}^{\frac{1}{kN}} 
z^{-\frac{1}{2}} \cdot 
e^{\frac{2 \pi}{k}\left(z \left(n - \frac{1}{24} \right) +\frac{r}{z}\right)} d\Phi
.
\end{eqnarray*}  
Substituting $z=\frac{k}{n}- i k \Phi$   gives 
\begin{eqnarray} \label{int}
I_{k,r}= \frac{1}{ki} 
\int_{\frac{k}{n}-\frac{i}{N}}^{\frac{k}{n}+\frac{i}{N}} 
z^{-\frac{1}{2}} \cdot 
e^{\frac{2 \pi}{k}\left(z \left(n - \frac{1}{24} \right) +\frac{r}{z}\right)} \ dz 
.
\end{eqnarray}
We now denote the circle through $\frac{k}{n} \pm \frac{i}{N}$ and tangent to the imaginary axis at $0$ by $\Gamma$. 
If $z=x+iy$, then $\Gamma$ is given by $x^2 + y^2 = \alpha x$, with $\alpha = \frac{k}{n} +\frac{n}{N^2k}$.
Using the fact that $2 > \alpha >\frac{1}{k}$, $\re(z)\leq \frac{k}{n}$, and $\re\left( \frac{1}{z}\right)<k$ on the smaller arc we can show that the integral along the smaller arc is in $O \left(n^{-\frac{3}{4}} \right)$. Moreover the path of integration in (\ref{int}) can be changed by Cauchy's Theorem into the larger arc of $\Gamma$.  Thus 
\begin{eqnarray*}
I_{k,r}= \frac{1}{ki} 
\int_{\Gamma} 
z^{-\frac{1}{2}} \cdot
e^{\frac{2 \pi}{k}\left(z \left(n - \frac{1}{24} \right) +\frac{r}{z}\right)}\ dz 
+ O \left( n^{-\frac{3}{4}}\right).
\end{eqnarray*}
Making the substitution $t = \frac{2 \pi r}{kz}$ gives 
\begin{displaymath}
I_{k,r} 
=
\frac{2 \pi}{k} \left( \frac{2 \pi r}{k} \right)^{\frac{1}{2}}
\frac{1}{2 \pi i } 
\int_{\gamma - i \infty}^{\gamma+ i \infty}
t^{-\frac{3}{2}} \cdot e^{t+\frac{\alpha}{t}} \ dt + O\left(n^{-\frac{3}{4}} \right),
\end{displaymath}
where $\gamma \in \R$ and $\alpha =\frac{ \pi^2 r}{6k^2}(24n-1)$. By the Hankel integral formula we 
now get
\begin{eqnarray*}
I_{k,r}
= 
\frac{4\sqrt{3}}{\sqrt{k   \left(24n - 1 \right)}} \sinh\left(\sqrt{\frac{2r(24n-1)}{3}}\frac{ \pi}{k}
  \right) 
+ 
O \left(n^{-\frac{3}{4}} 
\right).
\end{eqnarray*}
Thus we obtain, estimating the Kloosterman sums with the same arguments as before and using  Lemma \ref{intest},
\begin{multline*}
\sum_1+\sum_2= 
\frac{4 \sqrt{3} i }{  \sqrt{24n-1}} \sum_{c|k} 
\frac{B_{a,c,k}(-n,0)}{\sqrt{k}}  \cdot
\sinh \left(\frac{\pi}{6k} \sqrt{24n-1} \right) \\
+
  \frac{8 \sqrt{3}     \sin \left(\frac{\pi a}{c} \right) }{\sqrt{24n-1}}
    \sum_{k,r\atop {c\nmid k  \atop \delta_{c,k,r}>0} } 
\frac{D_{a,c,k}(-n,m_{a,c,k,r})}{\sqrt{k}}   \cdot
 \sinh \left( 
 \sqrt{\frac{2 \delta_{c,k,r}(24n-1)}{3}}
 \frac{\pi}{k} 
 \right)
 +O \left( n^{\epsilon}\right).
   \end{multline*}
To estimate  $\sum_3$, we split the path of integration as in (\ref{pathint})  and estimate the sums in the same way as before, using that  
$$
\sum_{1 \leq  \nu \leq k} 
g_{a,c,k,\nu}  
 \ll  \sum_{1 \leq \nu \leq 6 c k} \frac{1}{\nu}  \ll
  k^{\epsilon}.
 $$
  \end{proof}
  \begin{proof}[Proof of Corollary \ref{cor1}]
  Corollary \ref{cor1} follows directly from the identity
  \begin{eqnarray} \label{partid}
\sum_{n=0}^{\infty} N(a,c;n)q^n= \frac{1}{c}\sum_{n=0}^{\infty}
p(n)q^n+
\frac{1}{c}\sum_{j=1}^{c-1}
\zeta_c^{-aj}\cdot R(\zeta_c^j;q)
\end{eqnarray}
and  (\ref{Radformula}).
\end{proof}
\section{Proof of Theorem \ref{main2}} \label{ProofTH2}
Here  we prove the Andrews-Lewis conjecture. Using equation  (\ref{partid}) and 
$$
N \left(\frac{1}{3} ;q\right) = N \left(\frac{2}{3};q \right)
$$
easily gives that 
$$
N(0,3;n)< N(1,3;n) \Leftrightarrow 
A\left(\frac{1}{3};n\right)<0.
$$
Thus Theorem 2 follows from  the following.
\begin{proposition}
We have for all $n \not \in \left\{1,3,7\right\} $
\begin{eqnarray}
\label{eq1}
A \left( \frac{1}{3};3n\right)&<&0, \\ 
\label{eq2}
A \left( \frac{1}{3};3n+1\right)&>&0,\\ \label{eq3}
A \left( \frac{1}{3};3n+2\right)&<&0.
\end{eqnarray}
\end{proposition}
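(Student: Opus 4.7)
The strategy is to apply Theorem \ref{main1} with $a=1$ and $c=3$, extract the $k=3$ summand as the dominant term, and show that all other contributions (the tail with $k\geq 6$, $3\mid k$, and the $O_c(n^\epsilon)$ error) are overwhelmed by it once $n$ exceeds an effective threshold. By the first remark following Theorem \ref{main1}, the second (``$c\nmid k$'') sum is empty when $c=3$, so
\begin{equation*}
A\!\left(\tfrac{1}{3};n\right) \;=\; \frac{4\sqrt{3}\,i}{\sqrt{24n-1}} \sum_{\substack{1\leq k\leq\sqrt{n}\\ 3\mid k}} \frac{B_{1,3,k}(-n,0)}{\sqrt{k}} \sinh\!\left(\frac{\pi\sqrt{24n-1}}{6k}\right) + O(n^{\epsilon}).
\end{equation*}

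The next step is to evaluate the $k=3$ term in closed form. For $k=3$ the primitive residues are $h\in\{1,2\}$ with $h'\in\{2,1\}$, and a direct Dedekind-sum calculation from the definition of $t(h,k)$ in (\ref{omega}) gives $\omega_{1,3}=e^{i\pi/18}$ and $\omega_{2,3}=e^{-i\pi/18}$. Substituting into (\ref{kl1}) and collapsing via the identity $e^{iX}-e^{iY}=2ie^{i(X+Y)/2}\sin((X-Y)/2)$ yields
\begin{equation*}
B_{1,3,3}(-n,0) \;=\; 2i(-1)^n \sin\!\left(\tfrac{\pi}{18}+\tfrac{\pi n}{3}\right),
\end{equation*}
so the $k=3$ contribution is $M(n):=-\,8(-1)^n\sin(\pi/18+\pi n/3)\,\sinh(\pi\sqrt{24n-1}/18)/\sqrt{24n-1}$. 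The trigonometric identity $(-1)^n\sin(\pi/18+\pi n/3)=\sin(\pi/18-2\pi n/3)$ recovers the expression in the remark after Theorem \ref{main2}, and a case check on $n\bmod 3$ shows $M(n)$ is strictly negative for $n\equiv 0,2\pmod 3$ and strictly positive for $n\equiv 1\pmod 3$, exactly matching (\ref{eq1})--(\ref{eq3}). Moreover $|\sin(\pi/18+\pi n/3)|\geq\sin(\pi/18)>0.17$ uniformly, so $|M(n)|$ has a lower bound of order $\sinh(\pi\sqrt{24n-1}/18)/\sqrt{24n-1}$.

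It remains to dominate the tail and the $O_c(n^\epsilon)$ error by $|M(n)|$. For $3\mid k$ with $k\geq 6$ the factor $\sinh(\pi\sqrt{24n-1}/(6k))\leq\sinh(\pi\sqrt{24n-1}/36)$ is exponentially smaller than $\sinh(\pi\sqrt{24n-1}/18)$, while Lemma \ref{kloost}(2) bounds $|B_{1,3,k}(-n,0)|$ by $\gcd(24n-1,k)^{1/2}k^{1/2+\epsilon}$; summing over multiples of $3$ as in the proof of Theorem \ref{main1} gives a combined tail-plus-error bound of the form $C_\epsilon n^\epsilon\sinh(\pi\sqrt{24n-1}/36)$. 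Comparing against $|M(n)|$ yields an explicit $n_0$ beyond which (\ref{eq1})--(\ref{eq3}) hold, and the finitely many cases $n<n_0$ are verified by direct computation of $A(1/3;n)$ from (\ref{Nfunction}); this computation exposes the exceptional values $3n\in\{3,9,21\}$ where $A(1/3;3n)=0$ (equality in the first line of the Andrews--Lewis conjecture). The principal obstacle is keeping the constants in Lemma \ref{kloost}(2) and in the circle-method manipulations of Section \ref{ProofTH1} sharp enough that $n_0$ is small enough for the base-case verification to be tractable.
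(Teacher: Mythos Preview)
Your approach is essentially the same as the paper's: isolate the $k=3$ contribution, determine its sign via the explicit evaluation of $B_{1,3,3}(-n,0)$, bound the tail $k\geq 6$, make the $O(n^{\epsilon})$ error in Theorem~\ref{main1} explicit, and finish with a finite check. One practical difference is that the paper uses the \emph{trivial} bound $|B_{1,3,k}(-n,0)|\ll k$ for the tail rather than Lemma~\ref{kloost}(2); since the whole point is to obtain effective constants, invoking Lemma~\ref{kloost}(2) only creates extra work (its implied constants must themselves be made explicit), whereas the trivial bound already suffices because $\sinh(\pi\sqrt{24n-1}/36)$ is exponentially smaller than $\sinh(\pi\sqrt{24n-1}/18)$. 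The paper then unwinds the circle-method proof of Theorem~\ref{main1} piece by piece (the contributions from $N(h'/3;q_1)$ and $N(h',l,3;q_1)$, the symmetrization of the integration path, the small arc of $\Gamma$, and $\sum_3$ via an explicit version of Lemma~\ref{intest}), arriving at a threshold $n\geq 300$ and verifying $n\leq 3000$ by machine; your sketch identifies this as ``the principal obstacle'' but does not carry it out.
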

\begin{remark}
For $n \in \{1,3,7 \}$, equation (\ref{eq2}) and (\ref{eq3}) still hold and in (\ref{eq1}) we have equality.
\end{remark}
\begin{proof}
With MAPLE we compute that the proposition holds for $n \leq 3000$. Thus in the following, we may assume that $n>3000$.  Actually we only need that $n \geq 300$.

We first consider the main term given by 
\begin{eqnarray} \label{main}
\frac{4 \sqrt{3} i }{(24n-1)^{\frac{1}{2}}}
\sum_{3|k} 
\frac{B_{1,3,k}(-n,0)}{\sqrt{k}} \cdot
\sinh \left( \frac{\pi}{6k}\sqrt{24n-1}\right).
\end{eqnarray}
Using definition (\ref{omega})  it is easy to see that 
$$
B_{1,3,3}(-n,0)
= 2 i \sin \left( \frac{\pi}{18}-\frac{2 \pi n}{3}\right).
$$
Thus the term corresponding to $k=3$ is given by
\begin{eqnarray*}
- \frac{8 \sin\left(\frac{\pi}{18}-\frac{2 \pi n}{3} \right) \cdot \sinh \left(\frac{\pi}{18} \sqrt{24n-1} \right)}{(24n-1)^{\frac{1}{2}}}.
\end{eqnarray*}
Using the trivial bound for Kloosterman sums gives that the remaining terms can be estimated against
\begin{eqnarray*}
\frac{12}{(24n-1)^{\frac{1}{2}} } 
\sum_{2 \leq k \leq \frac{N}{3}} k^{\frac{1}{2}}  \cdot 
\sinh \left( \frac{\pi}{18k}\sqrt{24n-1}\right) .
\end{eqnarray*}
Next we make the $O$-term in Theorem \ref{main1} explicit. For this we use the same notation as in the  proof of Theorem \ref{main1}. Instead of using Lemma \ref{kloost}, we estimate the  Kloosterman sums trivially. The contribution coming from $N\left(\frac{h'}{3};q_1 \right)$ can be estimated against
\begin{eqnarray*}
\frac{2 \cdot e^{2 \pi +\frac{\pi}{24}}}{\sqrt{3}} 
\sum_{r=1}^{\infty}  |a(r)| \cdot e^{-\pi r} 
\sum_{1 \leq k \leq \frac{N}{3}} k^{-\frac{1}{2}}.
\end{eqnarray*}
Computing the first few coefficients and then using that for $n>1$ 
\begin{eqnarray*}
p(n)< \exp \left(\pi \sqrt{\frac{2n}{3}} \right) 
\end{eqnarray*}
easily gives that 
\begin{eqnarray*}
\sum_{r=1}^{\infty}  |a(r)|  \cdot e^{-\pi r}   \leq 0.06.
\end{eqnarray*}
In the same way we can estimate the contribution coming from $N(h',l,3;q_1)$ by
\begin{eqnarray*}
4 \sqrt{3} \cdot 
e^{2 \pi} 
\sum_{r=1}^{\infty}  |b(r)| \cdot e^{- \frac{\pi r}{144} }
\sum_{1 \leq k \leq N\atop 3 \nmid k} k^{-\frac{1}{2}}.
\end{eqnarray*}
Moreover, distinguishing the cases $l=1$ and $l=2$,  we get
\begin{eqnarray*}
\sum_{r=1}^{\infty}  |b(r)| \cdot e^{- \frac{\pi r}{216} } \leq 0.353.
\end{eqnarray*}
By making the path  of integration symmetric, we introduce an error that can be estimated against
\begin{eqnarray*}
2 \sqrt{3} \cdot e^{2 \pi +\frac{\pi}{12}}  \cdot n^{-\frac{1}{2}}
\sum_{1 \leq k \leq \frac{N}{3}} k^{\frac{1}{2}}.
\end{eqnarray*}
Integrating along the smaller arc of $\Gamma$ gives an error that can be estimated against \begin{eqnarray*}
8 \pi \cdot 
e^{2 \pi +\frac{\pi}{24}} \cdot  n^{-\frac{3}{4}}
\sum_{1 \leq k \leq \frac{N}{3}} k.
\end{eqnarray*}
Moreover the estimate in Lemma \ref{intest} can be made explicit as 
\begin{eqnarray*}
2^{\frac{5}{4}} \cdot \frac{e+e^{-1}}{3} \cdot n^{\frac{1}{4}} 
\left(\min\left(\left\{\frac{\nu}{k} -\frac{1}{6k}+\frac{1}{3}\right\},  \left\{\frac{\nu}{k} -\frac{1}{6k}+\frac{1}{3}\right\} \right)\right)^{-1}.
\end{eqnarray*}
This easily gives 
\begin{eqnarray*}
\sum_3 
\leq  2^{\frac{1}{4}} \cdot (e+e^{-1}) \cdot e^{2 \pi } \cdot n^{-\frac{1}{4}}
\sum_{k} \frac{1}{k}
 \sum_{\nu=1}^{k} 
\left( \min \left(\left\{\frac{\nu}{k} -\frac{1}{6k}+\frac{1}{3}\right\}, \left\{ \frac{\nu}{k} -\frac{1}{6k}-\frac{1}{3}\right\} \right)\right)^{-1}.
\end{eqnarray*}
Combining all errors one can show that  the term in (\ref{main}) is dominant for $n \geq 300$.
\end{proof}


\begin{thebibliography}{99}

\bibitem{An1} G. E. Andrews, \emph{The theory of partitions},
Cambridge Univ. Press, Cambridge, 1998.

\bibitem{An2} G. E. Andrews, \emph{On the theorems of
Watson and Dragonette for Ramanujan's mock theta functions}, Amer.
J. Math.  \textbf{88} No. 2 (1966), pages 454-490.

\bibitem{An3} G. E. Andrews, \emph{Mock theta functions,}
Theta functions - Bowdoin 1987, Part 2 (Brunswick, ME., 1987),
pages 283-297, Proc. Sympos. Pure Math. \textbf{49}, Part 2, Amer.
Math. Soc., Providence, RI., 1989.

\bibitem{AL} G. E. Andrews and R. P. Lewis,
\emph{The ranks and cranks of partitions moduli 2, 3 and 4}, J.
Number Th. \textbf{85} (2000), pages 74-84.

\bibitem{ASD} A. O. L. Atkin and H. P. F. Swinnerton-Dyer,
\emph{Some properties of partitions}, Proc. London Math. Soc.
\textbf{66} No. 4 (1954), pages 84-106.

\bibitem{BO1} K. Bringmann and K. Ono,
\emph{The $f(q)$ mock theta function conjecture and partition
ranks}, Invent. Math.
\textbf{165} (2006), pages 243-266.

\bibitem{BO2} K. Bringmann and K. Ono, \emph{Dysons ranks and Maass forms},  Annals, accepted for publication.   




\bibitem{Dr} L. Dragonette,
\emph{Some asymptotic formulae for the mock theta series of
Ramanujan}, Trans. Amer. Math. Soc.
\textbf{72} No. 3 (1952), pages 474-500.


\bibitem{Dy1} F. Dyson, \emph{Some guesses in the theory of
partitions}, Eureka (Cambridge) \textbf{8} (1944),
pages 10-15.

\bibitem{Dy2} F. Dyson,
\emph{A walk through Ramanujan's garden}, Ramanujan revisited
(Urbana-Champaign, Ill. 1987), Academic Press, Boston, 1988, pages
7-28.

\bibitem{Ga} F. Garvan: Generalization of Dyson's rank and non-Rogers-Ramanujan partitions.
Manuscripta Math. \textbf{84} (1994), pages 343-359.

\bibitem{GM} B. Gordon and R. McIntosh,
\emph{Modular transformations of Ramanujan's fifth and seventh
order mock theta functions}, Ramanujan J. \textbf{7} (2003), pages
193-222.

\bibitem{Le} R. P. Lewis, \emph{The ranks of partitions modulo
2}, Discuss. Math. \textbf{167/168} (1997), pages 445-449.


\bibitem{Ra} S. Ramanujan,
\emph{The lost notebook and other unpublished papers}, Narosa, New
Delhi, 1988.



\end{thebibliography}
\end{document}